\PassOptionsToPackage{unicode}{hyperref}
\PassOptionsToPackage{hyphens}{url}
\PassOptionsToPackage{dvipsnames,svgnames,x11names}{xcolor}
\documentclass[
]{article}
\usepackage{xcolor}
\usepackage{amsmath,amssymb}
\usepackage{iftex}
\ifPDFTeX
  \usepackage[T1]{fontenc}
  \usepackage[utf8]{inputenc}
  \usepackage{textcomp} 
\else 
  \usepackage{unicode-math} 
  \defaultfontfeatures{Scale=MatchLowercase}
  \defaultfontfeatures[\rmfamily]{Ligatures=TeX,Scale=1}
\fi
\usepackage{lmodern}
\ifPDFTeX\else
\fi
\IfFileExists{upquote.sty}{\usepackage{upquote}}{}
\IfFileExists{microtype.sty}{
  \usepackage[]{microtype}
  \UseMicrotypeSet[protrusion]{basicmath} 
}{}
\makeatletter
\@ifundefined{KOMAClassName}{
  \IfFileExists{parskip.sty}{%
    \usepackage{parskip}
  }{
    \setlength{\parindent}{0pt}
    \setlength{\parskip}{6pt plus 2pt minus 1pt}}
}{
  \KOMAoptions{parskip=half}}
\makeatother
\makeatletter
\ifx\paragraph\undefined\else
  \let\oldparagraph\paragraph
  \renewcommand{\paragraph}{
    \@ifstar
      \xxxParagraphStar
      \xxxParagraphNoStar
  }
  \newcommand{\xxxParagraphStar}[1]{\oldparagraph*{#1}\mbox{}}
  \newcommand{\xxxParagraphNoStar}[1]{\oldparagraph{#1}\mbox{}}
\fi
\ifx\subparagraph\undefined\else
  \let\oldsubparagraph\subparagraph
  \renewcommand{\subparagraph}{
    \@ifstar
      \xxxSubParagraphStar
      \xxxSubParagraphNoStar
  }
  \newcommand{\xxxSubParagraphStar}[1]{\oldsubparagraph*{#1}\mbox{}}
  \newcommand{\xxxSubParagraphNoStar}[1]{\oldsubparagraph{#1}\mbox{}}
\fi
\makeatother

\usepackage{color}
\usepackage{fancyvrb}

\DefineVerbatimEnvironment{Highlighting}{Verbatim}{commandchars=\\\{\}}
\usepackage{framed}
\definecolor{shadecolor}{RGB}{241,243,245}
\newenvironment{Shaded}{\begin{snugshade}}{\end{snugshade}}

\newcommand{\AttributeTok}[1]{\textcolor[rgb]{0.40,0.45,0.13}{#1}}

\newcommand{\BuiltInTok}[1]{\textcolor[rgb]{0.00,0.23,0.31}{#1}}

\newcommand{\DecValTok}[1]{\textcolor[rgb]{0.68,0.00,0.00}{#1}}

\newcommand{\ErrorTok}[1]{\textcolor[rgb]{0.68,0.00,0.00}{#1}}
\newcommand{\ExtensionTok}[1]{\textcolor[rgb]{0.00,0.23,0.31}{#1}}

\newcommand{\KeywordTok}[1]{\textcolor[rgb]{0.00,0.23,0.31}{\textbf{#1}}}
\newcommand{\NormalTok}[1]{\textcolor[rgb]{0.00,0.23,0.31}{#1}}
\newcommand{\OperatorTok}[1]{\textcolor[rgb]{0.37,0.37,0.37}{#1}}

\usepackage{longtable,booktabs,array}
\usepackage{calc} 
\usepackage{etoolbox}
\makeatletter
\patchcmd\longtable{\par}{\if@noskipsec\mbox{}\fi\par}{}{}
\makeatother
\IfFileExists{footnotehyper.sty}{\usepackage{footnotehyper}}{\usepackage{footnote}}
\makesavenoteenv{longtable}
\usepackage{graphicx}
\makeatletter
\newsavebox\pandoc@box
\newcommand*\pandocbounded[1]{
  \sbox\pandoc@box{#1}%
  \Gscale@div\@tempa{\textheight}{\dimexpr\ht\pandoc@box+\dp\pandoc@box\relax}%
  \Gscale@div\@tempb{\linewidth}{\wd\pandoc@box}%
  \ifdim\@tempb\p@<\@tempa\p@\let\@tempa\@tempb\fi
  \ifdim\@tempa\p@<\p@\scalebox{\@tempa}{\usebox\pandoc@box}%
  \else\usebox{\pandoc@box}%
  \fi%
}
\def\fps@figure{htbp}
\makeatother

\NewDocumentCommand\citeproctext{}{}

\makeatletter
 \let\@cite@ofmt\@firstofone
 \def\@biblabel#1{}
 \def\@cite#1#2{{#1\if@tempswa , #2\fi}}
\makeatother
\newlength{\cslhangindent}
\newlength{\csllabelwidth}
\newenvironment{CSLReferences}[2] 
 {\begin{list}{}{%
  \setlength{\itemindent}{0pt}
  \setlength{\leftmargin}{0pt}
  \setlength{\parsep}{0pt}
  \ifodd #1
   \setlength{\leftmargin}{\cslhangindent}
   \setlength{\itemindent}{-1\cslhangindent}
  \fi
  \setlength{\itemsep}{#2\baselineskip}}}
 {\end{list}}
\usepackage{calc}

\providecommand{\tightlist}{%
  \setlength{\itemsep}{0pt}\setlength{\parskip}{0pt}}

\usepackage{arxiv}
\usepackage{orcidlink}
\usepackage{amsmath}
\usepackage[T1]{fontenc}
\makeatletter
\@ifpackageloaded{caption}{}{\usepackage{caption}}
\AtBeginDocument{%
\ifdefined\contentsname
  \renewcommand*\contentsname{Table of contents}
\else
  \newcommand\contentsname{Table of contents}
\fi
\ifdefined\listfigurename
  \renewcommand*\listfigurename{List of Figures}
\else
  \newcommand\listfigurename{List of Figures}
\fi
\ifdefined\listtablename
  \renewcommand*\listtablename{List of Tables}
\else
  \newcommand\listtablename{List of Tables}
\fi
\ifdefined\figurename
  \renewcommand*\figurename{Figure}
\else
  \newcommand\figurename{Figure}
\fi
\ifdefined\tablename
  \renewcommand*\tablename{Table}
\else
  \newcommand\tablename{Table}
\fi
}
\@ifpackageloaded{float}{}{\usepackage{float}}
\floatstyle{ruled}
\@ifundefined{c@chapter}{\newfloat{codelisting}{h}{lop}}{\newfloat{codelisting}{h}{lop}[chapter]}
\floatname{codelisting}{Listing}

\usepackage{amsthm}
\theoremstyle{plain}
\newtheorem{corollary}{Corollary}[section]
\theoremstyle{plain}
\newtheorem{proposition}{Proposition}[section]
\theoremstyle{plain}
\newtheorem{lemma}{Lemma}[section]
\theoremstyle{plain}
\newtheorem{theorem}{Theorem}[section]
\theoremstyle{remark}
\AtBeginDocument{}

\makeatother
\makeatletter
\@ifpackageloaded{caption}{}{\usepackage{caption}}
\@ifpackageloaded{subcaption}{}{\usepackage{subcaption}}
\makeatother
\usepackage{bookmark}
\IfFileExists{xurl.sty}{\usepackage{xurl}}{} 
\hypersetup{
  pdftitle={Random Triangles on Concentric Circles},
  pdfauthor={Brandon M. Greenwell},
  pdfkeywords={geometric probability, concentric circles, Pillow
Problem, random triangles, arcsine distribution},
  colorlinks=true,
  linkcolor={blue},
  filecolor={Maroon},
  citecolor={Blue},
  urlcolor={Blue},
  pdfcreator={LaTeX via pandoc}}

\newcommand{\runninghead}{A Preprint }
\renewcommand{\runninghead}{A preprint }
\title{Random Triangles on Concentric Circles}
\author{\textbf{Brandon M.
Greenwell}~\orcidlink{0000-0002-8120-0084}\\Department of Operations,
Business Analytics, and Information Systems\\University of Cincinnati
Carl H. Lindner College of Business\\Cincinnati,
Ohio,\ 45221\\\href{mailto:greenwbm@ucmail.uc.edu}{greenwbm@ucmail.uc.edu}}
\date{}
\begin{document}
\maketitle
\begin{abstract}
Lewis Carroll's Pillow Problem asks for the probability that three
points chosen at random in the plane form an obtuse triangle. The
question has no answer until the sampling scheme is pinned down, and the
cleanest way to pin it down is to put the points on a circle, which
gives 3/4. This paper solves the version in which the three vertices lie
on three concentric circles of radii \(r_1\), \(r_2\), and \(r_3\). The
answer is a sum of three terms, each the probability that a weighted sum
of two independent arcsine variables exceeds a threshold, and it
collapses to 3/4 when the radii are equal. Reading the formula gives
closed forms in two subfamilies, one an arcsine and one the Legendre chi
function; a Pythagorean condition deciding which vertex can carry the
obtuse angle; and the bound \(1/2 \leq P < 1\), with the minimum
attained only when one vertex sits at the common center and the other
two radii are equal. Conditioning on the radii shows that the same
formula is the kernel for every independent rotationally symmetric
sampling scheme, so the bound applies to all of them at once and
averaging over Rayleigh radii recovers the known Gaussian value of 3/4
in closed form, supplying a link between the circular and Gaussian
conventions that the literature records as missing. Simulation confirms
the results throughout.
\end{abstract}
{\bfseries \emph Keywords}
\def\sep{\textbullet\ }
geometric probability \sep concentric circles \sep Pillow
Problem \sep random triangles \sep 
arcsine distribution

\textbf{MSC 2020:} 60D05 (primary); 52A22, 00A08 (secondary).

\section{Introduction}\label{sec-intro}

Problem 58 of Carroll's \emph{Pillow Problems} (Carroll 1893) reads:
``Three points are taken at random on an infinite plane. Find the chance
of their being the vertices of an obtuse-angled triangle.'' Carroll
answered \(3\pi / (8\pi - 6\sqrt{3}) \approx 0.639\), but the derivation
conditions on the longest side in a way that quietly fixes a scale, and
the problem as posed has no answer at all: there is no uniform
distribution on the plane, and different limiting schemes give different
numbers. Portnoy (1994) works through Carroll's argument and the several
defensible alternatives; Guy (1993) and Eisenberg and Sullivan (1996)
reach the same conclusion from other directions. The situation is the
Bertrand paradox in a different costume, and the standard resolution is
the same one: state the sampling mechanism, then compute.

I have been thinking about this problem since I met Buffon's needle at
the end of an undergraduate degree in statistics. Buffon's needle is the
well-posed case: dropped on a ruled plane, its position and angle take
their distribution from the symmetry of the strips (M. G. Kendall and
Moran 1963, sec. 3.26). Carroll's three points have no such symmetry to
draw on, so the convention has to be supplied by hand.

One mechanism is unusually clean. Put the three points independently and
uniformly on a common circle. A triangle inscribed in a circle is acute
exactly when the center lies inside it, so the acute probability is the
probability that three uniform points on a circle contain the center,
which is \(1/4\) (Wendel 1962). The obtuse probability is therefore
\(3/4\), with no conditioning tricks and no limits.

A second route to the same number is more transparent still, and it will
matter later. Discard the points and keep the angles. The triple
\((A, B, C)\) with \(A + B + C = \pi\) ranges over an equilateral
triangle in barycentric coordinates, and Edelman and Strang (2015) take
the uniform distribution on it as the simplest model of a random
triangle. The angle at a given vertex is obtuse exactly when it exceeds
half the total, and that corner region is a copy of the whole simplex at
half scale, hence a quarter of the area. The three corners are disjoint,
so \(P(\text{obtuse}) = 3/4\), and the acute triangles form the medial
triangle, of area \(1/4\) (Figure~\ref{fig-angles}).

\begin{figure}

\centering{

\pandocbounded{\includegraphics[keepaspectratio]{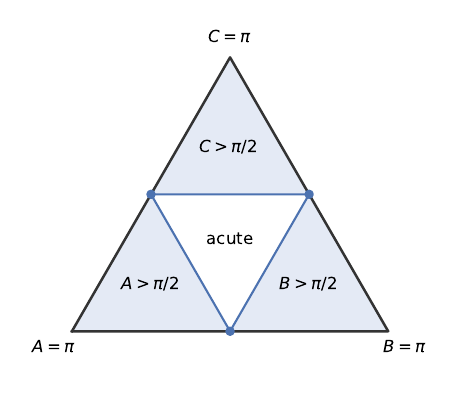}}

}

\caption{\label{fig-angles}Angle space. Every triangle is a point of
\(\{A + B + C = \pi\}\), drawn in barycentric coordinates, and the
uniform distribution on it is the simplest model of a random triangle.
The angle at \(A\) exceeds \(\pi/2\) in the shaded corner nearest the
vertex \(A = \pi\), and that corner is the whole picture at half scale,
so it holds a quarter of the area. The same goes for \(B\) and \(C\),
and the three cannot overlap, leaving the medial triangle, again a
quarter, as the acute triangles. Its edges are the right triangles and
its corners are degenerate, with one angle equal to zero.}

\end{figure}%

This is not a second convention but the first one in different
coordinates. Rooted at one of the sampled points, the three arc gaps of
three uniform points on a circle are uniform on
\(\{g_1 + g_2 + g_3 = 2\pi\}\), and the inscribed angle theorem makes
the interior angle at each vertex half the arc opposite it. The angle
triple is therefore uniform on \(\{A + B + C = \pi\}\): up to labelling
and reflection, the circular convention and the uniform-angle convention
induce the same law on triangle shape. (The rooting matters. Gaps read
off against a fixed external origin are size-biased and are not
Dirichlet.)

With Buffon's needle still in mind, I came at the question a third way
as an undergraduate, and this is the version that convinced me. Buffon
drops a needle of finite length \(\ell\); drop needles of infinite
length instead, which is to say lines, and three of them cut out a
triangle (Figure~\ref{fig-lines}). Toss them under the same invariant
measure that makes Buffon's problem well posed, \(dp\,d\theta\). Its
angular part is uniform, so take the directions
\(\theta_1, \theta_2, \theta_3\) independent and uniform on
\([0, \pi)\), and let the offsets be anything at all. The offset part is
not normalizable, exactly as in Carroll's problem, but here that costs
nothing: sliding a line parallel to itself moves and resizes the
triangle without changing any of its angles, so the shape depends on the
directions alone. Three lines fail to cut out a triangle only when two
of them are parallel or all three pass through a common point, and both
have probability zero. A right triangle needs an angle equal to
\(\pi/2\) exactly, which is also a null event, so obtuse and acute
between them account for everything.

Sort the directions as
\(\theta_{(1)} \leq \theta_{(2)} \leq \theta_{(3)}\) in \([0, \pi)\).
The interior angles are then \[
\theta_{(2)} - \theta_{(1)}, \qquad \theta_{(3)} - \theta_{(2)}, \qquad
\pi - \left(\theta_{(3)} - \theta_{(1)}\right) ,
\] which are the three gaps between the directions on a circle of
circumference \(\pi\). A triangle is obtuse exactly when one angle
exceeds \(\pi/2\), that is when one gap covers more than half of that
circle. Fix a direction and look at the gap running counterclockwise
from it: that gap covers more than half the circle precisely when the
other two directions both fall in the opposite half, which has
probability \(\left(\tfrac12\right)^2 = \tfrac14\). Two gaps cannot both
exceed half, so the three cases are disjoint and \[
P(\text{obtuse}) = 3 \times \tfrac14 = \tfrac34 .
\]

Three constructions, then, and one measure. Uniform points on a circle,
the uniform distribution on angle space, and lines under the invariant
measure all induce the same law on the shape of a triangle.

\begin{figure}

\centering{

\pandocbounded{\includegraphics[keepaspectratio]{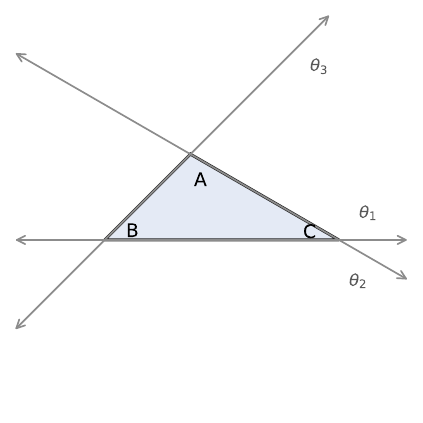}}

}

\caption{\label{fig-lines}Three needles of infinite length, which is to
say lines, tossed onto the plane and drawn with arrowheads to mark that
they run on forever. They cut out a triangle. Its interior angles \(A\),
\(B\), \(C\) are the gaps between the line directions
\(\theta_1, \theta_2, \theta_3\), so sliding any line parallel to itself
moves and resizes the triangle but leaves all three angles alone.}

\end{figure}%

A genuinely different convention reaches the same \(3/4\) by an argument
worth seeing on its own terms. Take the six vertex coordinates to be
independent standard Gaussians. Shape theory then places the triangle
uniformly on a hemisphere of shapes, a result of David Kendall and
collaborators (D. G. Kendall 1989), who is not the M. G. Kendall of the
paragraph above, and Edelman and Strang (2015) show (their Theorem 7)
that under that law the squared side lengths, normalized so that
\(a^2 + b^2 + c^2 = 1\), are each uniform on \([0, 2/3]\). The angle
opposite \(c\) is obtuse exactly when \(c^2 > a^2 + b^2\), which under
the normalization is simply \(c^2 > 1/2\). So a given angle is obtuse
with probability \[
\frac{2/3 - 1/2}{2/3} = \frac{1}{4} ,
\] and because at most one angle of a triangle can be obtuse, the three
cases add to \(3/4\).

Both moves in that argument reappear below. Additivity over the three
vertices is Lemma~\ref{lem-additive}. A Pythagorean threshold on squared
quantities decides which vertex can carry the obtuse angle in
Corollary~\ref{cor-support}, though there the inequality constrains the
radii of the circles rather than the sides of the triangle.

This paper takes that formulation and gives each vertex its own circle.
Fix radii \(r_1, r_2, r_3 \geq 0\) and let vertex \(k\) be uniform on
the circle of radius \(r_k\), all three circles concentric. Setting
\(r_1 = r_2 = r_3\) must return \(3/4\), and it does. The general
answer, Theorem~\ref{thm-main} below, is a sum of three terms, each of
which is the probability that a weighted sum of two independent arcsine
random variables clears a threshold. The formula is a single
one-dimensional integral to evaluate, it reduces to closed forms in
several subfamilies, and it makes the qualitative structure visible:
which vertex can carry the obtuse angle, how low the obtuse probability
can go, and how it behaves as the radii spread apart.

Random triangles have a large literature, including shape-space and
random matrix treatments (Edelman and Strang 2015). The
concentric-circle family is a two-parameter deformation of the classical
circular case that stays elementary throughout: the tools are the law of
sines and a change of variables.

\section{Setup}\label{sec-setup}

Write \(O\) for the common center of the three circles and let
\(r_k \geq 0\) denote the radius of the circle carrying vertex \(k\),
with at most one of the three radii equal to zero. (Two zero radii would
place two vertices at \(O\) and there would be no triangle.) Let
\(\Theta_1, \Theta_2, \Theta_3\) be independent and uniform on
\([0, 2\pi)\) and set \[
P_k = r_k\left(\cos \Theta_k,\ \sin \Theta_k\right), \qquad k = 1, 2, 3 ,
\] so \(P_k\) is uniform on the circle of radius \(r_k\) about \(O\).
Throughout, \(\{i, j, k\}\) denotes \(\{1, 2, 3\}\) in some order.
Scaling all three radii by a common factor scales the triangle without
changing any of its angles, so only the ratios \(r_1 : r_2 : r_3\)
matter.

Let \(p_k\) be the probability that the interior angle at \(P_k\) is
obtuse, and let
\(P(\text{obtuse}) = \Pr(\text{the triangle } P_1P_2P_3 \text{ is obtuse})\).

\begin{lemma}[Additivity]\protect\hypertarget{lem-additive}{}\label{lem-additive}

\(P(\text{obtuse}) = p_1 + p_2 + p_3\).

\end{lemma}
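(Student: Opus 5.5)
The plan is to write the obtuse event as a union of three per-vertex events and show the union is disjoint up to null sets. Let \(E_k\) be the event that the interior angle at \(P_k\) is obtuse, so \(p_k = \Pr(E_k)\). Whenever \(P_1, P_2, P_3\) form a nondegenerate triangle, it is obtuse exactly when one of its angles exceeds \(\pi/2\), so \(\{\text{obtuse}\} = E_1 \cup E_2 \cup E_3\). The angles of a nondegenerate triangle sum to \(\pi\), so two of them cannot both exceed \(\pi/2\). Hence the \(E_k\) are pairwise disjoint, and finite additivity gives \(P(\text{obtuse}) = p_1 + p_2 + p_3\).

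The one point that needs care is that the configuration is a genuine triangle almost surely, so that interior angles are defined and sum to \(\pi\). I would check this from the independence of the uniform angles \(\Theta_k\). By the standing assumption, at most one radius is zero, and two vertices coincide only if \(r_i = r_j > 0\) and \(\Theta_i = \Theta_j\), which has probability zero. For collinearity, condition on the two vertices with the larger radii, or on any two vertices with \(P_i \neq P_j\). If the third radius \(r_k\) is positive, the line through \(P_i\) and \(P_j\) meets the circle of radius \(r_k\) in at most two points, so \(\Pr(P_k \text{ on that line} \mid P_i, P_j) = 0\), since \(\Theta_k\) is continuous. If \(r_k = 0\), then \(P_k = O\), and collinearity means that \(P_i\), \(P_j\), \(O\) are aligned, i.e. \(\Theta_i - \Theta_j \in \{0, \pi\} \pmod{2\pi}\). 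This again has probability zero because \(\Theta_i\) is continuous given \(\Theta_j\). Degenerate configurations therefore form a null event, and it can be discarded from every \(E_k\) without changing any probability.

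The main (and only mild) obstacle is this degeneracy bookkeeping, in particular the case with one zero radius, where the third vertex is fixed at \(O\) rather than random and the "third point misses a line" argument has to be replaced by the angular-alignment argument. I would also note that right triangles (an angle exactly \(\pi/2\)) need not be excluded: they lie in no \(E_k\) and in no obtuse event, so they affect neither side of the identity.
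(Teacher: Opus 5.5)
Your proof is correct and follows the paper's route: the three per-vertex events are disjoint because the angles sum to $\pi$, once a null set of degenerate configurations is discarded. The only differences are in the null-set bookkeeping. You show that coincident and collinear configurations are null by elementary geometry (a line meets a circle of positive radius in at most two points, and alignment with $O$ when one radius vanishes), where the paper uses the fact that a real-analytic function on the torus that is not identically zero has a null zero set. You are also right that right triangles need not be excluded for this identity; the paper's argument that they are null is what later justifies $\Pr(\text{acute}) = 1 - P(\text{obtuse})$ in Proposition~\ref{prp-center}.
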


\emph{Proof.} The three interior angles sum to \(\pi\), so at most one
of them exceeds \(\pi/2\) and the three events are disjoint. Their union
is the event that the triangle is obtuse, up to the event that some
angle equals exactly \(\pi/2\). The angle at \(P_k\) is right when \[
D_k = (P_i - P_k) \cdot (P_j - P_k) = r_ir_j\cos(\Theta_i - \Theta_j)
  - r_ir_k\cos(\Theta_i - \Theta_k) - r_jr_k\cos(\Theta_j - \Theta_k) + r_k^2
\] vanishes. As a function on the torus \(D_k\) is real analytic, and it
is not identically zero. Its mean over the torus is \(r_k^2\), since
each of the three cosine terms integrates to zero, and that is positive
unless \(r_k = 0\); when \(r_k = 0\) it reduces to
\(r_ir_j\cos(\Theta_i - \Theta_j)\), which is not identically zero
because at most one radius vanishes. The zero set of a nonvanishing real
analytic function has measure zero. The same argument disposes of the
collinear and coincident configurations, on which ``obtuse'' is
undefined: both are cut out by real analytic equations that do not
vanish identically. \(\square\)

Lemma~\ref{lem-additive} is what makes the problem tractable. It
converts a question about a triangle into three separate questions about
a single vertex, each of which involves only two independent angles.

\section{The angle at one vertex}\label{sec-vertex}

Fix the vertex \(P_k\) and one companion \(P_i\). The angle at \(P_k\)
is obtuse exactly when \((P_i - P_k) \cdot (P_j - P_k) < 0\), that is,
when \(P_j\) falls in the open half plane \(H\) bounded by the line
through \(P_k\) perpendicular to \(P_kP_i\), on the side away from
\(P_i\) (Figure~\ref{fig-geometry}). Since \(P_j\) is uniform on its own
circle, the conditional probability is just the fraction of that circle
lying in \(H\), and that fraction depends on \(P_k\) and \(P_i\) only
through the signed distance from the center to the boundary line.

\begin{figure}

\centering{

\pandocbounded{\includegraphics[keepaspectratio]{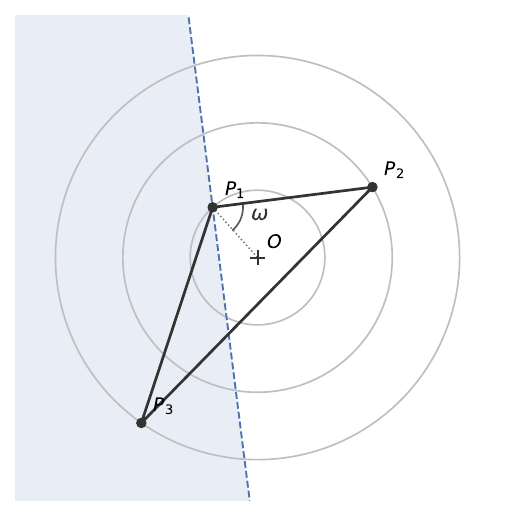}}

}

\caption{\label{fig-geometry}One realization with radii \((1, 2, 3)\).
The angle at \(P_1\) is obtuse exactly when \(P_3\) falls in the shaded
half plane, whose boundary passes through \(P_1\) perpendicular to
\(P_1P_2\).}

\end{figure}%

Let \[
\omega = \angle O P_k P_i \in [0, \pi]
\] be the angle at \(P_k\) in the triangle \(OP_kP_i\); it is the angle
subtended at \(P_k\) between the center and the companion vertex. The
unit normal of \(H\) pointing away from \(P_i\) is
\(n = -(P_i - P_k)/\lVert P_i - P_k \rVert\), and because \(\omega\) is
measured between the directions \(P_k \to O\) and \(P_k \to P_i\), we
have \(P_k \cdot n = r_k \cos\omega\). So \(H\) contains the arc of the
circle of radius \(r_j\) on which \(r_j \cos \Xi < -r_k\cos\omega\) for
\(\Xi\) uniform, giving

\begin{equation}\phantomsection\label{eq-conditional}{
\Pr(\text{angle at } P_k \text{ is obtuse} \mid P_i)
= \frac{1}{\pi}\arccos\left(\left[\frac{r_k\cos\omega}{r_j}\right]\right)
= \frac{1}{2} - \frac{1}{\pi}
  \arcsin\left(\left[\frac{r_k\cos\omega}{r_j}\right]\right) ,
}\end{equation}

where \([x] = \max(-1, \min(1, x))\) truncates to the domain of
\(\arcsin\); the truncation is what happens when the circle of radius
\(r_j\) lies entirely on one side of the line.

What remains is the distribution of \(\omega\).

\begin{lemma}[The viewing
angle]\protect\hypertarget{lem-omega}{}\label{lem-omega}

Let \(\psi = \angle O P_i P_k\) and let
\(h = r_k \sin\omega = r_i \sin\psi\) be the distance from the center to
the line \(P_kP_i\). Then

\begin{itemize}
\tightlist
\item
  if \(r_k \leq r_i\), the folded angle \(\min(\omega, \pi - \omega)\)
  is uniform on \((0, \pi/2)\);
\item
  if \(r_k \geq r_i\), the acute branch
  \(\psi_0 = \min(\psi, \pi - \psi)\) is uniform on \((0, \pi/2)\). (The
  line meets the circle twice, and \(\psi\) is obtuse at the far point,
  so \(\psi\) itself is not confined to \((0, \pi/2)\).)
\end{itemize}

In both cases \(h = \min(r_i, r_k)\sin\mu\) for a \(\mu\) uniform on
\((0, \pi/2)\).

\end{lemma}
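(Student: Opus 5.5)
Rotational symmetry lets us fix \(P_k = (r_k, 0)\). Then \(P_i\) is uniform on the circle of radius \(r_i\), parametrised by the relative angle \(\phi = \Theta_i - \Theta_k\), uniform on \([0, 2\pi)\). Every quantity in the statement is a function of \(\phi\): the angles \(\omega\) and \(\psi\) of the triangle \(OP_kP_i\), and the distance \(h\) from \(O\) to the line \(P_kP_i\). The plan has two parts. First, the circle carrying the \emph{smaller} radius sees the other vertex along a line whose direction is uniformly distributed relative to the radius at the small point. Second, \(h\) is read off as that smaller radius times the sine of this uniform angle.

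\textbf{Case \(r_k \leq r_i\).} Here I would work in the frame of the line through \(P_k\). Let \(\alpha\) be the angle at \(P_k\) between the ray \(P_k \to O\) (or, equivalently, the outward radial direction) and the ray \(P_k \to P_i\). The ray from \(P_k\) in direction \(\alpha\) meets the circle of radius \(r_i \geq r_k\) in exactly one forward point, because \(P_k\) lies inside or on that circle. This gives a map \(\phi \mapsto \alpha\) that is a bijection of the circle, and \(\omega\) is \(|\alpha|\) folded to \([0, \pi]\).

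The key step is to check that \(\phi\) uniform pushes forward to \(\alpha\) uniform. I do not expect this to hold directly: the density of \(\alpha\) is proportional to \(d\phi/d\alpha\), which is not constant when \(r_k > 0\). So I would instead aim at the folded statement. Compute \(\cos\omega\) from the law of cosines, \(\cos\omega = (r_k - r_i\cos\phi)/\lVert P_i - P_k\rVert\), and derive \(d\omega/d\phi\). Alternatively, use the law of sines, \(r_k\sin\omega = r_i\sin\psi\), and show that the density of \(\omega\) on \((0,\pi)\) satisfies \(f(\omega) + f(\pi - \omega) = 2/\pi\). This symmetrisation is what should make the folded angle uniform: the two preimages (the near and far branches) contribute Jacobians that sum to a constant.

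Concretely, with \(\sin\psi = (r_k/r_i)\sin\omega\), the angle at \(O\) is \(\phi = \pi - \omega - \psi\), so \(d\phi/d\omega = -1 - d\psi/d\omega\). Pairing \(\omega\) with \(\pi - \omega\) flips the sign of \(\cos\omega\) and hence of \(d\psi/d\omega\), while \(\psi\) stays acute because \(r_k \le r_i\). The two Jacobians therefore sum to exactly \(2\). With \(\phi\) having density \(1/(2\pi)\), counted over both orientations, the folded angle gets density \(2/\pi\) on \((0, \pi/2)\).

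\textbf{Case \(r_k \geq r_i\).} This is the same computation with the roles of \(k\) and \(i\) swapped, applied to \(\psi\) via \(\sin\omega = (r_i/r_k)\sin\psi\). Now \(\omega\) is forced acute and \(\psi\) ranges over both branches, which the folding \(\psi_0\) absorbs.

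\textbf{The formula for \(h\).} This follows immediately: \(h = r_k\sin\omega = r_i\sin\psi\), and the sine is unchanged by folding. Take \(\mu\) to be whichever folded angle sits at the vertex on the smaller circle.

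The main obstacle is the Jacobian bookkeeping. One has to verify that the branch which is acute (\(\psi\) when \(r_k \le r_i\)) is single-valued, so that \(d\psi/d\omega = r_k\cos\omega/(r_i\cos\psi)\) is odd under \(\omega \mapsto \pi - \omega\). One also has to handle the degenerate sets (\(r_k = 0\), \(r_k = r_i\), \(\cos\psi = 0\)), which are null.
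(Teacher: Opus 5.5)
Your proposal is correct and is essentially the paper's argument: the paper also writes the central angle as $A = \pi - \omega - \psi(\omega)$ with $A$ uniform on $(0,\pi)$. It also uses the law of sines to make the acute angle $\psi(\omega)$ a single-valued function of $\omega$ satisfying $\psi(\omega) = \psi(\pi-\omega)$, and it handles $r_k \ge r_i$ by swapping the roles of the two vertices. The only difference is that the paper works with the distribution function $\Pr(\omega \le w) = (w + \psi(w))/\pi$ and adds the two tails, which is the integrated form of your Jacobian identity $(1+\psi'(\omega)) + (1+\psi'(\pi-\omega)) = 2$ and so avoids the differentiability bookkeeping at the degenerate cases you flag.
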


\emph{Proof.} Let \(A = \angle P_k O P_i\) be the central angle. Since
\(\Theta_i - \Theta_k\) is uniform on the circle, \(A\) is uniform on
\((0, \pi)\). The interior angles satisfy \(A + \omega + \psi = \pi\),
and the law of sines gives \(r_i \sin\psi = r_k \sin\omega\), so
\(\sin\psi = (r_k/r_i)\sin\omega\).

If \(r_k \leq r_i\) then \(\psi\) is opposite the shorter side and is
therefore acute, so \(\psi = \arcsin\left((r_k/r_i)\sin\omega\right)\)
is a function of \(\omega\) alone and
\(\omega = \pi - A - \psi(\omega)\). Hence
\begin{equation}\phantomsection\label{eq-cdf-inside}{
\Pr(\omega \leq w) = \Pr\left(A \geq \pi - w - \psi(w)\right)
  = \frac{w + \psi(w)}{\pi}, \qquad 0 \leq w \leq \pi .
}\end{equation} Both \(\omega \leq w\) and \(\omega \geq \pi - w\)
contribute to \(\min(\omega, \pi - \omega) \leq w\) for
\(w \leq \pi/2\), and adding the two pieces from
Equation~\ref{eq-cdf-inside} gives \(2w/\pi\), because
\(\psi(w) = \psi(\pi - w)\).

If \(r_k \geq r_i\) the line from \(P_k\) meets the circle of radius
\(r_i\) twice, at the near point (where \(\psi\) is acute) and the far
point (where \(\psi\) is obtuse), and
\(\omega \leq \arcsin(r_i/r_k) \leq \pi/2\). Writing \(\psi_0\) for the
acute value, the two branches contribute
\(\Pr(A \geq \pi - \omega - \psi_0)\) and
\(\Pr(A \leq \psi_0 - \omega)\), which sum to \(2\psi_0/\pi\).
\(\square\)

Lemma~\ref{lem-omega} has a consequence beyond its use here: the
distance from the center to the chord through a fixed point and a
uniform point on a circle has the law \(\min(r_i, r_k) \sin \mu\) with
\(\mu\) uniform, and so depends on the two radii only through their
minimum. I record that in passing and claim no priority for it; random
chords are old ground in integral geometry and I have not searched that
literature. Evaluating Equation~\ref{eq-conditional} needs slightly more
than \(h\), however, since it needs the sign of \(\cos\omega\) as well.
That is the content of the next lemma, which does the bookkeeping once
so the theorem can be read off.

\begin{lemma}[Signed
transfer]\protect\hypertarget{lem-signed}{}\label{lem-signed}

Let \(S = \operatorname{sign}(\cos\omega)\). For every bounded
measurable \(\phi\), \begin{equation}\phantomsection\label{eq-transfer}{
\mathbb{E}\left[S\,\phi(h)\right]
  = \frac{2}{\pi}\int_0^{\pi/2}
    \phi\left(r_i \sin t\right)\,
    \mathbf{1}\!\left\{r_i \sin t < r_k\right\} dt .
}\end{equation}

\end{lemma}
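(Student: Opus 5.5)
The plan is to split into the same two cases as Lemma~\ref{lem-omega}, $r_k \geq r_i$ and $r_k < r_i$, and in each to rewrite the left side of Equation~\ref{eq-transfer} as an integral over the angle $\psi$ at $P_i$, using $h = r_i\sin\psi$. The indicator $\mathbf{1}\{r_i\sin t < r_k\}$ should then come out as the range of $\psi$ in each case.

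\textbf{The easy case $r_k \geq r_i$.} The proof of Lemma~\ref{lem-omega} shows that $\omega \leq \arcsin(r_i/r_k) \leq \pi/2$. Hence $S = 1$ almost surely; the event $\omega = \pi/2$ exactly needs $r_k = r_i$ and a null configuration. The same lemma gives $h = r_i\sin\psi_0$ with $\psi_0$ uniform on $(0,\pi/2)$. Therefore $\mathbb{E}[S\phi(h)] = \frac{2}{\pi}\int_0^{\pi/2}\phi(r_i\sin t)\,dt$. The indicator is identically one here, since $r_i\sin t \leq r_i \leq r_k$ with equality only at a single point. This gives Equation~\ref{eq-transfer}.

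\textbf{The case $r_k < r_i$.} Here $S$ genuinely changes sign, and cancellation must produce the truncation. By Equation~\ref{eq-cdf-inside}, $\omega$ has density $(1+\psi'(w))/\pi$ on $(0,\pi)$, where $\psi(w) = \arcsin\big((r_k/r_i)\sin w\big)$ is smooth because $r_k/r_i<1$. Also $h = r_k\sin\omega$, and $S = +1$ on $\omega<\pi/2$ and $-1$ on $\omega>\pi/2$. So I would write
\[
\mathbb{E}[S\phi(h)] = \frac{1}{\pi}\int_0^{\pi/2}\phi(r_k\sin w)(1+\psi'(w))\,dw - \frac{1}{\pi}\int_{\pi/2}^{\pi}\phi(r_k\sin w)(1+\psi'(w))\,dw .
\]
Next substitute $w\mapsto\pi-w$ in the second integral. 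Since $\psi(\pi-w)=\psi(w)$, we have $\psi'(\pi-w) = -\psi'(w)$. The constant parts of the two integrals cancel, and the $\psi'$ parts add, leaving $\frac{2}{\pi}\int_0^{\pi/2}\phi(r_k\sin w)\,\psi'(w)\,dw$.

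Finally change variables to $t=\psi(w)$, which is increasing from $0$ to $\arcsin(r_k/r_i)$ on $[0,\pi/2]$. By the law of sines, $r_k\sin w = r_i\sin t$. This turns the integral into $\frac{2}{\pi}\int_0^{\arcsin(r_k/r_i)}\phi(r_i\sin t)\,dt$. On $(0,\pi/2)$, the range $t<\arcsin(r_k/r_i)$ is exactly the set where $r_i\sin t<r_k$, which matches Equation~\ref{eq-transfer}.

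The main obstacle is the sign bookkeeping in the second case. The reflection $w\mapsto\pi-w$ must flip the sign of $\psi'$ but not of the constant density part, and the boundary cases ($\omega=\pi/2$, and $r_k=r_i$) need to be checked as null. I would verify these first, and use them to confirm that the two cases agree at $r_k=r_i$.
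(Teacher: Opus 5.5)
Your proposal is correct and follows the paper's proof essentially step for step. In the first case you use $S\equiv 1$ with $\psi_0$ uniform. In the second you split the density $(1+\psi'(w))/\pi$ of $\omega$, where the reflection $w\mapsto\pi-w$ cancels the constant part and doubles the $\psi'$ part (the paper's split $\pi\,dG = d\omega + d\psi$), and you finish with the substitution $t=\psi(w)$ via $r_k\sin w = r_i\sin t$.
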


\emph{Proof.} Suppose first \(r_k \geq r_i\). Then
\(\omega \leq \pi/2\), so \(S \equiv 1\), and
\(h = r_i\sin\psi \leq r_i \leq r_k\) with \(\psi\) uniform on
\((0, \pi/2)\) by Lemma~\ref{lem-omega}; the indicator holds almost
surely and Equation~\ref{eq-transfer} is immediate.

Suppose instead \(r_k < r_i\) and write \(G\) for the distribution
function Equation~\ref{eq-cdf-inside}, so that
\(\pi \, dG = d\omega + d\psi\). Split the integral accordingly. The
\(d\omega\) part is \[
\frac{1}{\pi}\int_0^{\pi} \operatorname{sign}(\cos\omega)\,
  \phi\!\left(r_k\sin\omega\right) d\omega = 0 ,
\] because that component has constant density \(1/\pi\) and so is
preserved by \(\omega \mapsto \pi - \omega\), an involution that
reverses the sign of \(\cos\omega\) while fixing \(\sin \omega\). In the
\(d\psi\) part, \(\psi\) increases from \(0\) to \(\arcsin(r_k/r_i)\) as
\(\omega\) runs over \((0, \pi/2)\), where \(S = +1\), and decreases
back to \(0\) on \((\pi/2, \pi)\), where \(S = -1\); reversing the
orientation of the second piece shows that the two contribute equally.
Since \(r_k \sin\omega = r_i\sin\psi\), \[
\mathbb{E}\left[S\,\phi(h)\right]
 = \frac{2}{\pi}\int_0^{\arcsin(r_k/r_i)} \phi\left(r_i\sin t\right) dt ,
\] and \(r_i \sin t < r_k\) is exactly \(t < \arcsin(r_k/r_i)\).
\(\square\)

\section{The main result}\label{sec-main}

\begin{theorem}[Obtuse probability on concentric
circles]\protect\hypertarget{thm-main}{}\label{thm-main}

Let \(X\) and \(Y\) be independent and uniform on \((0, \pi/2)\). Then
\begin{equation}\phantomsection\label{eq-pk}{
p_k = \frac{1}{2}\,
  \Pr\!\left(r_i^2 \sin^2 X + r_j^2 \sin^2 Y > r_k^2\right) ,
}\end{equation} and consequently
\begin{equation}\phantomsection\label{eq-main}{
P(\text{obtuse}) = \frac{1}{2}\sum_{k=1}^{3}
  \Pr\!\left(r_i^2 \sin^2 X + r_j^2 \sin^2 Y > r_k^2\right) .
}\end{equation}

\end{theorem}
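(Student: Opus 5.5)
The plan is to average the conditional probability in Equation~\ref{eq-conditional} over the pair $(P_k,P_i)$. Lemma~\ref{lem-signed} turns that average into a one-dimensional integral in a variable $t$ with $X$-law. I then recognise the integrand as a probability over an independent $Y$. Summing over $k$ with Lemma~\ref{lem-additive} then gives Equation~\ref{eq-main}, so all the work is in Equation~\ref{eq-pk}. I assume first that $r_k>0$ and $r_j>0$, and treat the degenerate radii at the end.

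\textbf{Step 1: split off the sign of $\cos\omega$.} Since $P_j$ is independent of $(P_k,P_i)$, Equation~\ref{eq-conditional} gives
\[
p_k=\tfrac12-\tfrac1\pi\,\mathbb{E}\bigl[\arcsin\bigl([r_k\cos\omega/r_j]\bigr)\bigr].
\]
Both truncation and $\arcsin$ are odd. Write $S=\operatorname{sign}(\cos\omega)$ and use $r_k|\cos\omega|=\sqrt{r_k^2-r_k^2\sin^2\omega}=\sqrt{r_k^2-h^2}$, where $h=r_k\sin\omega$. The argument of the expectation then becomes $S\,\phi(h)$ with
\[
\phi(h)=\arcsin\bigl(\bigl[\sqrt{r_k^2-h^2}/r_j\bigr]\bigr).
\]
The function $\phi$ is bounded, and it is well defined whenever $h\le r_k$, which always holds.

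\textbf{Step 2: apply the signed transfer.} Lemma~\ref{lem-signed} gives
\[
\mathbb{E}[S\phi(h)]=\frac2\pi\int_0^{\pi/2}\phi(r_i\sin t)\,\mathbf 1\{r_i\sin t<r_k\}\,dt
=\mathbb{E}_X\bigl[\phi(r_i\sin X)\,\mathbf 1\{r_i\sin X<r_k\}\bigr].
\]

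\textbf{Step 3: read the integrand as a probability.} Fix $x$ with $r_i^2\sin^2x<r_k^2$ and set $c=r_k^2-r_i^2\sin^2x>0$. For $Y$ uniform on $(0,\pi/2)$,
\[
\Pr_Y\bigl(r_j^2\sin^2Y\le c\bigr)=\Pr\bigl(\sin Y\le[\sqrt c/r_j]\bigr)=\frac2\pi\arcsin\bigl([\sqrt c/r_j]\bigr),
\]
which is $\tfrac2\pi\phi(r_i\sin x)$. When the indicator fails, the event $r_i^2\sin^2X+r_j^2\sin^2Y\le r_k^2$ also fails, up to a null set. So $\tfrac1\pi\mathbb{E}[S\phi(h)]=\tfrac12\Pr\bigl(r_i^2\sin^2X+r_j^2\sin^2Y\le r_k^2\bigr)$, and
\[
p_k=\tfrac12-\tfrac12\Pr(\le)=\tfrac12\Pr(>),
\]
after discarding the null boundary event where equality holds.

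\textbf{Step 4: degenerate radii.} If $r_k=0$, then $P_k=O$ and the angle at $O$ is obtuse iff $\cos(\Theta_i-\Theta_j)<0$, which has probability $\tfrac12$; the formula also gives $\tfrac12\Pr(\,\cdot>0)=\tfrac12$. If $r_j=0$, then $P_j=O$ and the angle is obtuse iff $\cos\omega<0$. This matches Equation~\ref{eq-conditional} read with $[\pm\infty]=\pm1$, so Steps 1--3 go through with $\phi\equiv\pi/2$. The target probability becomes $\Pr(r_i^2\sin^2X>r_k^2)$, and I check it against Lemma~\ref{lem-signed} with $\phi\equiv 1$. The case $r_i=0$ is covered by Lemma~\ref{lem-signed} as stated.

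I expect the main obstacle to be Step 1. There one has to justify pulling out $S$ through the truncation and express $|\cos\omega|$ through $h$ alone, so that Lemma~\ref{lem-signed} applies to a genuine function of $h$. Everything after that is a change of variables.
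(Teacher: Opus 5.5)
Your proposal is correct and follows the paper's proof essentially step for step: factor the truncated arcsine as $S\,\phi(h)$ using $r_k\cos\omega = S\sqrt{r_k^2-h^2}$ and oddness, apply Lemma~\ref{lem-signed}, read $\tfrac{2}{\pi}\phi(r_i\sin t)$ as $\Pr(r_i^2\sin^2 t + r_j^2\sin^2 Y < r_k^2)$ with the indicator absorbed, integrate over $X$, and finish with Lemma~\ref{lem-additive}. Your separate treatment of zero radii in Step 4 is a harmless addition that the paper leaves implicit in its truncation convention; the paper also writes $\max(r_k^2-h^2,0)$ inside $\phi$ so that $\phi$ is defined where Lemma~\ref{lem-signed} evaluates it at $r_i\sin t > r_k$, a point you should match.
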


\emph{Proof.} Take expectations in Equation~\ref{eq-conditional}. Since
\(r_k\cos\omega = S\sqrt{r_k^2 - h^2}\) and \(\arcsin\) is odd, the
truncated arcsine factors as \(S\,\phi(h)\) with \[
\phi(h) = \arcsin\left[\min\left(1,\
  \frac{\sqrt{\max(r_k^2 - h^2,\, 0)}}{r_j}\right)\right] .
\] Lemma~\ref{lem-signed} then gives
\begin{equation}\phantomsection\label{eq-quadrature}{
p_k = \frac{1}{2} - \frac{2}{\pi^2}\int_0^{\pi/2}
  \arcsin\left[\min\left(1,\
  \frac{\sqrt{r_k^2 - r_i^2\sin^2 t}}{r_j}\right)\right]
  \mathbf{1}\!\left\{r_i\sin t < r_k\right\} dt .
}\end{equation} For fixed \(t\) with \(r_i \sin t < r_k\), \[
\frac{2}{\pi}\arcsin\left[\min\left(1,\
  \frac{\sqrt{r_k^2 - r_i^2\sin^2 t}}{r_j}\right)\right]
 = \Pr\!\left(r_j \sin Y < \sqrt{r_k^2 - r_i^2 \sin^2 t}\right)
 = \Pr\!\left(r_i^2\sin^2 t + r_j^2 \sin^2 Y < r_k^2\right) ,
\] and when \(r_i \sin t \geq r_k\) that probability is zero, which
absorbs the indicator. Integrating over \(t\) against the uniform law of
\(X\) turns Equation~\ref{eq-quadrature} into
\(p_k = \tfrac12 - \tfrac12\Pr(r_i^2\sin^2 X +
r_j^2\sin^2 Y < r_k^2)\), which is Equation~\ref{eq-pk}.
Lemma~\ref{lem-additive} gives Equation~\ref{eq-main}. \(\square\)

First, Equation~\ref{eq-pk} is symmetric in \(i\) and \(j\), as it must
be, even though the derivation broke that symmetry by conditioning on
\(P_i\).

Second, \(X\) and \(Y\) are auxiliary. They are not the two viewing
angles of the same realized triangle, and the corresponding event is not
a pointwise restatement of ``the angle at \(P_k\) is obtuse.'' It is an
identity between probabilities, not between events. Equation~\ref{eq-pk}
is reached by integrating the companion vertex out through
Lemma~\ref{lem-signed} rather than by any pathwise coupling, so there is
no realization in which \(X\) and \(Y\) are the two viewing angles.

Third, if \(U = \sin^2 X\) then \(U\) has the arcsine distribution on
\([0,1]\), with density \(1 / (\pi\sqrt{u(1-u)})\), so
Equation~\ref{eq-main} can be written with \(U\) and \(V\) independent
and identically distributed (iid),
\begin{equation}\phantomsection\label{eq-arcsine}{
P(\text{obtuse}) = \frac{1}{2}\sum_{k=1}^{3}
  \Pr\!\left(r_i^2 U + r_j^2 V > r_k^2\right), \qquad
U, V \overset{\text{iid}}{\sim} \text{Arcsine}(0, 1) .
}\end{equation} The arcsine law is the standard companion of uniform
angles (M. G. Kendall and Moran 1963), and in this form the three terms
are visibly the same function of the squared radii evaluated at three
different thresholds.

For computation, Equation~\ref{eq-quadrature} is a one-dimensional
integral of an elementary function with two kinks, at
\(t = \arcsin\min(1, r_k/r_i)\) and where the arcsine saturates.
Adaptive quadrature with those break points supplied is accurate to a
few units in the last place: it reproduces the two closed forms of
Section~\ref{sec-cases} to within \(4 \times 10^{-16}\), and its own
reported error estimate stays below \(3 \times 10^{-9}\) over a grid of
radii.

\section{Special cases}\label{sec-cases}

\begin{corollary}[Equal
radii]\protect\hypertarget{cor-equal}{}\label{cor-equal}

If \(r_1 = r_2 = r_3\) then \(p_k = 1/4\) for each \(k\) and
\(P(\text{obtuse}) = 3/4\).

\end{corollary}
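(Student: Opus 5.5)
With all radii equal to \(r\), Equation~\ref{eq-pk} gives \(p_k = \tfrac12\Pr(\sin^2 X + \sin^2 Y > 1)\), since the common factor \(r^2\) cancels. This is the same expression for every \(k\). So the plan is to show that this probability equals \(1/2\), and then to invoke Lemma~\ref{lem-additive} to sum the three equal terms.

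To evaluate the probability, rewrite the event. For \(X, Y \in (0, \pi/2)\),
\[
\sin^2 X + \sin^2 Y > 1 \iff \sin^2 X > \cos^2 Y = \sin^2(\pi/2 - Y) .
\]
Because \(\sin\) is increasing on \((0, \pi/2)\), this is equivalent to \(X > \pi/2 - Y\), that is, \(X + Y > \pi/2\). Now \(X\) and \(Y\) are independent and uniform on the square \((0, \pi/2)^2\). The line \(X + Y = \pi/2\) is the anti-diagonal of that square, which splits it into two triangles of equal area. Hence \(\Pr(X + Y > \pi/2) = 1/2\).

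An equivalent route, which I would mention as a check, uses the arcsine form Equation~\ref{eq-arcsine}. The quantity needed is \(\Pr(U + V > 1)\), and the arcsine law is symmetric under \(u \mapsto 1 - u\). Therefore \(1 - V\) has the same law as \(U\), and \(\Pr(U > 1 - V) = \Pr(U > U')\) for an independent copy \(U'\), which is \(1/2\) because ties have probability zero.

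Either way, \(p_k = \tfrac12 \cdot \tfrac12 = \tfrac14\) for each \(k\), and Lemma~\ref{lem-additive} gives \(P(\text{obtuse}) = 3/4\). This agrees with the circular case of Section~\ref{sec-intro}. There is no real obstacle in this argument. The only point to state explicitly is that the boundary event \(X + Y = \pi/2\) has probability zero, so whether the inequality is strict does not matter.
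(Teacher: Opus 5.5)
Your proof is correct and matches the paper's argument: with a common (necessarily positive) radius, the event in Equation~\ref{eq-pk} reduces to $\sin^2 X + \sin^2 Y > 1$, equivalently $X + Y > \pi/2$, which has probability $1/2$ by symmetry, so $p_k = 1/4$ and Lemma~\ref{lem-additive} gives $3/4$. Your arcsine-reflection check is a valid restatement of the same symmetry but adds nothing the main argument needs.
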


\emph{Proof.} With a common radius the event in Equation~\ref{eq-pk} is
\(\sin^2 X + \sin^2 Y > 1\), that is \(\cos^2 X < \sin^2 Y\), that is
\(X + Y > \pi/2\), which has probability \(1/2\) by symmetry.
\(\square\)

This is the classical answer, recovered without appealing to the
inscribed-angle argument. The two routes agree: for a common circle,
acute is the same as containing the center, and three uniform points
contain the center with probability \(1/4\) (Wendel 1962). That second
route does not survive the generalization, and it repays being precise
about which half of it fails.

\begin{proposition}[The center and the circumcenter part
company]\protect\hypertarget{prp-center}{}\label{prp-center}

If \(r_1, r_2, r_3 > 0\) then
\(\Pr\left(O \in \operatorname{conv}\{P_1, P_2, P_3\}\right) = 1/4\),
whatever the radii. Moreover, with the positivity hypothesis dropped,
\(\Pr(\text{acute}) = 1 - P(\text{obtuse})\) takes every value in
\((0, 1/2]\) as the radii range over all admissible triples.

\end{proposition}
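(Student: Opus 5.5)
The first claim depends only on the directions of the three points, so I will prove it by reducing to directions. Since all radii are positive, $O$ lies in the triangle $P_1P_2P_3$ exactly when $O$ is a convex combination of the $P_k$. Writing $P_k = r_k u_k$ with $u_k$ the unit vector at angle $\Theta_k$, we have $O = \sum \lambda_k r_k u_k$ with $\lambda_k \ge 0$. Rescaling $\lambda_k \mapsto \lambda_k r_k / \sum_m \lambda_m r_m$ shows this holds if and only if $O \in \operatorname{conv}\{u_1,u_2,u_3\}$, because positive reweighting preserves the positive cone. The event therefore does not involve the radii. Its probability is the equal-radius value, $1/4$, by Wendel (1962). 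Alternatively, it is the event that no semicircle contains all three directions, which has probability $1 - 3/4 = 1/4$ by the same half-circle count used for the lines in the introduction.

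For the second claim I will use Theorem~\ref{thm-main} in the arcsine form of Equation~\ref{eq-arcsine}. The acute probability $1 - P(\text{obtuse})$ is a continuous function of the ratios $r_1:r_2:r_3$ on the admissible set, which allows at most one zero radius. That set is connected, for instance the part of the positive-orthant simplex with at most one zero coordinate. So the image is an interval, and it suffices to exhibit the endpoints.

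\emph{Value $1/2$.} Take $r_3 = 0$ and $r_1 = r_2 = 1$. Then $p_3 = \tfrac12\Pr(U+V>0) = \tfrac12$. Also $p_1 = \tfrac12\Pr(V > 1) = 0$ and likewise $p_2 = 0$. Hence $P(\text{obtuse}) = 1/2$ and the acute probability is $1/2$. Equivalently, $P_3 = O$ and $P_1, P_2$ lie on a common circle, so the angle at $O$ is obtuse exactly when the central angle exceeds $\pi/2$.

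\emph{Values near $0$.} Take $r_1 = r_2 = 1$ and $r_3 = R \to \infty$. Then $p_1 = \tfrac12\Pr(V + R^2 U' > 1)$, where $U'$ is arcsine. This tends to $\tfrac12$, since $\Pr(R^2U' \le 1) = \tfrac{2}{\pi}\arcsin(1/R) \to 0$. Similarly $p_2 \to \tfrac12$. Finally $p_3 = \tfrac12\Pr(U+V > R^2) = 0$ once $R^2 \ge 2$. So $P(\text{obtuse}) \to 1$ and the acute probability tends to $0$. Connectedness then fills $(0,1/2]$.

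\emph{Upper bound.} The step I expect to be the real obstacle is showing the acute probability never exceeds $1/2$, that is $P(\text{obtuse}) \ge 1/2$ for all admissible radii. Without this the range might be larger than $(0,1/2]$. I will also need the acute probability to be strictly positive, which holds because the acute event contains an open set of configurations. For the bound, I would order $r_1 \le r_2 \le r_3$. The plan is to show $p_1 + p_2 \ge \tfrac12$ directly: then $P(\text{obtuse}) \ge p_1 + p_2 \ge \tfrac12$ since $p_3 \ge 0$, and this direct route avoids any case split on whether $r_3^2 \ge r_1^2 + r_2^2$. Since $r_3 \ge r_2$, Equation~\ref{eq-pk} gives
\[
2p_1 \ge \Pr(r_2^2\sin^2 X + r_2^2 \sin^2 Y > r_1^2) \ge \Pr(\sin^2X+\sin^2Y>1) = \tfrac12 .
\]
Similarly $2p_2 = \Pr(r_1^2\sin^2X + r_3^2\sin^2Y > r_2^2)$. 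Combining the two bounds does not close immediately. If it fails, I would fall back on the later result the abstract announces, $1/2 \le P$, if it is permitted to be cited here. Otherwise I would prove it by a coupling between the thresholds $r_1^2$ and $r_2^2$ in the two terms.
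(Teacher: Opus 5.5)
Your proposal follows the paper's proof. The first claim is reduced to the directions and closed with Wendel's theorem; your reweighting of the convex coefficients is the same observation as the paper's remark that $P_k$ and $u_k$ lie on the same open ray. For the second claim, the paper likewise confines $\Pr(\text{acute})$ to $(0,1/2]$ by citing Proposition~\ref{prp-bounds}, and fills the interval by continuity along $(1,1,\rho)$ between $\rho=0$ and $\rho\to\infty$. That citation is not circular, since the proof of Proposition~\ref{prp-bounds} never uses this result, so your fallback is the intended route.

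The same citation also covers your positivity claim, which still needs a witness. Openness of the acute set does not make it nonempty, and Proposition~\ref{prp-bounds} supplies an explicit acute configuration for every admissible triple.

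If you want the lower bound proved inline, bounding $2p_1$ and $2p_2$ separately by $1/2$ cannot work: at radii $(0,1,1)$ one has $2p_2=0$. The coupling you are looking for is the reflection $(U,V)\mapsto(1-U,1-V)$, which preserves the arcsine law. It gives $2p_2=\Pr(r_1^2U+r_3^2V<r_1^2+r_3^2-r_2^2)$. Up to a null set, this event contains $\{r_2^2U+r_3^2V\le r_1^2\}$, hence $2p_1+2p_2\ge 1$.
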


\emph{Proof.} Because \(r_k > 0\), the point \(P_k\) and the unit vector
\(u_k = P_k/r_k\) lie on the same open ray from \(O\), hence strictly on
the same side of every line through \(O\). So \(O\) lies in the convex
hull of \(P_1, P_2, P_3\) if and only if \(u_1, u_2, u_3\) fail to lie
in a common open half plane, an event determined by
\(\Theta_1, \Theta_2, \Theta_3\) alone. Those angles are uniform
whatever the radii, and Wendel's theorem (Wendel 1962) gives
\(2^{-2}\left[\binom{2}{0} + \binom{2}{1}\right] = 3/4\) for the
probability that three uniform directions in the plane do lie in a half
plane, leaving \(1/4\) for the containment. The second claim carries no
positivity hypothesis, and it needs none: Proposition~\ref{prp-bounds}
below confines \(\Pr(\text{acute})\) to \((0, 1/2]\), and along the
slice \((1, 1, \rho)\) the obtuse probability is a continuous function
of \(\rho\) that equals \(1/2\) at \(\rho = 0\) and tends to \(1\) as
\(\rho \to \infty\), so every intermediate value is attained.
\(\square\)

The endpoint \(\rho = 0\) of that slice is precisely the configuration
the first claim excludes. With a vertex sitting at \(O\) the containment
probability jumps from \(1/4\) to \(1\), so the two halves of the
proposition are describing genuinely different regimes rather than the
same one twice.

A triangle is acute exactly when it contains its circumcenter, and the
circumcenter is the common center of the circles only when the three
radii agree. Unequal radii pull those two points apart, and the two
probabilities follow suit: containment is pinned at \(1/4\) by the
directions alone, while acuteness responds to the radii and ranges over
the whole of \((0, 1/2]\). Equation~\ref{eq-main} is the measurement of
how far the two have separated. They agree numerically on the level set
\(P(\text{obtuse}) = 3/4\), which is a curve rather than a point, but
the underlying events coincide only at equal radii.

\begin{corollary}[Which vertex can be
obtuse]\protect\hypertarget{cor-support}{}\label{cor-support}

\(p_k > 0\) if and only if \(r_k^2 < r_i^2 + r_j^2\), and \(p_k = 1/2\)
if and only if \(r_k = 0\). At most one of the three inequalities can
fail, except when one radius is zero and the other two are equal, in
which case two fail.

\end{corollary}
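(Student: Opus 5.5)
All three claims will be read off Equation~\ref{eq-pk} of Theorem~\ref{thm-main}, using the arcsine form $p_k = \tfrac12\Pr(r_i^2U + r_j^2V > r_k^2)$ with $U, V$ iid arcsine on $[0,1]$, as in Equation~\ref{eq-arcsine}. The law of $U$ has full support on $[0,1]$ and an everywhere positive density on $(0,1)$. Hence the law of $W = r_i^2U + r_j^2V$ has support exactly $[0, r_i^2 + r_j^2]$.

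For the first claim, the product of two positive densities is positive on the open square, so every open subinterval of $(0, r_i^2+r_j^2)$ carries positive mass. One caveat: if one of $r_i, r_j$ is zero, $W$ is a scaled arcsine, which still has positive density on its open support.
\begin{itemize}
\item If $r_k^2 < r_i^2 + r_j^2$, the event $W > r_k^2$ contains a neighbourhood of the top corner $U = V = 1$ intersected with the square. That set has positive probability, so $p_k > 0$.
\item If $r_k^2 \ge r_i^2 + r_j^2$, then $W \le r_i^2 + r_j^2 \le r_k^2$ always, so $\Pr(W > r_k^2) = 0$ and $p_k = 0$.
\end{itemize}

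For the second claim, $p_k = 1/2$ exactly when $\Pr(W \le r_k^2) = 0$.
\begin{itemize}
\item If $r_k = 0$, then $r_i, r_j$ are not both zero, since at most one radius vanishes. So $\Pr(W \le 0) = \Pr(r_i^2U + r_j^2V = 0) = 0$, because $U$ and $V$ have no atom at $0$.
\item If $r_k > 0$, the event $W \le r_k^2$ contains a neighbourhood of the origin corner $U = V = 0$ within the square, which has positive probability. Hence $p_k < 1/2$.
\end{itemize}
The only care needed in either claim is to state that small corner neighbourhoods have positive arcsine mass, which follows from the density $1/(\pi\sqrt{u(1-u)})$ being positive on $(0,1)$.

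For the third claim, suppose two inequalities fail, say $r_1^2 \ge r_2^2 + r_3^2$ and $r_2^2 \ge r_1^2 + r_3^2$. Adding them gives $0 \ge 2r_3^2$, so $r_3 = 0$. Both inequalities then reduce to $r_1^2 \ge r_2^2$ and $r_2^2 \ge r_1^2$, so $r_1 = r_2$.

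Conversely, if $r_3 = 0$ and $r_1 = r_2$, then the inequalities for $k = 1$ and $k = 2$ both fail with equality. The inequality for $k = 3$ reads $0 < 2r_1^2$, which holds because $r_1 > 0$ by admissibility. So in this configuration exactly two of the three inequalities fail, which completes the exceptional case. All three cannot fail together: the same addition argument would force two radii to be zero, which is excluded.

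I expect no real obstacle here. The most delicate point is the boundary case $r_k^2 = r_i^2 + r_j^2$, where the threshold sits exactly at the top of the support. It is settled because the event $W > r_k^2$ is then empty, not merely null.
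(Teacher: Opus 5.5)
Your proof is correct and follows the same route as the paper: both read the two equivalences off Equation~\ref{eq-pk}, using that $r_i^2U + r_j^2V$ has support $[0, r_i^2 + r_j^2]$ with positive density inside, and both settle the last claim by adding two failed inequalities. You also spell out several points the paper leaves implicit: the corner-neighbourhood positivity, the emptiness of the event in the equality case, the converse that the exceptional configuration gives exactly two failures, and the impossibility of all three failing.
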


\emph{Proof.} The random variable \(r_i^2\sin^2 X + r_j^2\sin^2 Y\) has
support \([0, r_i^2 + r_j^2]\) with positive density in the interior, so
the probability in Equation~\ref{eq-pk} is positive exactly when
\(r_k^2 < r_i^2 + r_j^2\) and equals one exactly when \(r_k = 0\). If
two of the inequalities failed, say \(r_k^2 \geq r_i^2 + r_j^2\) and
\(r_j^2 \geq r_i^2 + r_k^2\), adding them forces \(r_i = 0\) and then
\(r_j = r_k\). \(\square\)

So the vertex sitting on a circle that dominates the other two in the
Pythagorean sense is never the obtuse one, and the sum in
Equation~\ref{eq-main} has only two nonzero terms whenever the radii are
spread out. Being the largest is not enough on its own, though: at radii
\((1, 1, 1.2)\) the outer vertex still carries the obtuse angle with
probability 0.1058.

\begin{corollary}[One vertex at the
center]\protect\hypertarget{cor-center}{}\label{cor-center}

If \(r_3 = 0\) and \(r_1 \leq r_2\) then \[
P(\text{obtuse}) = 1 - \frac{1}{\pi}\arcsin\!\left(\frac{r_1}{r_2}\right) .
\]

\end{corollary}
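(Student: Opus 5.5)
We read off the three terms of Equation~\ref{eq-main} with $r_3 = 0$ and $r_1 \leq r_2$, using Theorem~\ref{thm-main} for each.

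\emph{The term $k = 3$.} The threshold is $r_3^2 = 0$. The sum $r_1^2\sin^2 X + r_2^2\sin^2 Y$ is positive almost surely, so this probability is $1$. Hence $p_3 = 1/2$, which is also what Corollary~\ref{cor-support} says.

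\emph{The term $k = 2$.} The event is $r_1^2\sin^2 X + 0 > r_2^2$. Since $r_1 \leq r_2$, it has probability zero, so $p_2 = 0$. This agrees with Corollary~\ref{cor-support}, because $r_2^2 \geq r_1^2 + r_3^2$.

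\emph{The term $k = 1$.} Here $i = 2$ and $j = 3$, so the event is $r_2^2\sin^2 X > r_1^2$, that is $\sin X > r_1/r_2$. With $X$ uniform on $(0, \pi/2)$ and $r_1/r_2 \in (0, 1]$,
\[
\Pr\!\left(X > \arcsin(r_1/r_2)\right) = 1 - \frac{2}{\pi}\arcsin(r_1/r_2),
\]
so $p_1 = \tfrac12 - \tfrac1\pi \arcsin(r_1/r_2)$.

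Adding the three terms gives
\[
P(\text{obtuse}) = \tfrac12 + 0 + \tfrac12 - \tfrac1\pi\arcsin(r_1/r_2) = 1 - \tfrac1\pi\arcsin(r_1/r_2).
\]

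The only point needing care is admissibility. The radii $r_1$ and $r_2$ must both be positive, since at most one radius may vanish. Then $r_1/r_2$ lies in the domain of $\arcsin$, and the strict and non-strict inequalities differ only on null events.

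As a sanity check, $r_1 = r_2$ gives $1/2$, matching the minimum cited in the proof of Proposition~\ref{prp-center}. Also, $r_1 \to 0$ gives $1$.

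This is a direct specialization of Theorem~\ref{thm-main}, and I expect no real obstacle.
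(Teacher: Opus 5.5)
Your proposal is correct and follows the paper's proof: specialize Equation~\ref{eq-pk} to get $p_3 = 1/2$, $p_2 = 0$ and $p_1 = \tfrac12\bigl(1 - \tfrac{2}{\pi}\arcsin(r_1/r_2)\bigr)$, then add. The only difference is cosmetic: you read $p_3$ and $p_2$ directly off Theorem~\ref{thm-main}, where the paper cites Corollary~\ref{cor-support} for them.
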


\emph{Proof.} By Corollary~\ref{cor-support}, \(p_3 = 1/2\), and
\(p_2 = 0\) because \(r_2^2 \geq r_1^2 + 0\). For the remaining term,
\(p_1 = \tfrac12\Pr(r_2^2\sin^2 X > r_1^2) = \tfrac12\left(1 -
\tfrac{2}{\pi}\arcsin(r_1/r_2)\right)\). \(\square\)

The expression decreases from \(1\) at \(r_1/r_2 \to 0\) to \(1/2\) at
\(r_1 = r_2\). The lower end is the configuration of
Proposition~\ref{prp-bounds} below.

\begin{corollary}[Two equal
radii]\protect\hypertarget{cor-chi}{}\label{cor-chi}

If \(r_1 = r_2 = 1\) and \(r_3 = \rho \geq \sqrt{2}\) then \[
P(\text{obtuse}) = 1 - \frac{4}{\pi^2}\,\chi_2\!\left(\frac{1}{\rho}\right),
\qquad
\chi_2(s) = \sum_{n \geq 0} \frac{s^{2n+1}}{(2n+1)^2} ,
\] where \(\chi_2\) is the Legendre chi function.

\end{corollary}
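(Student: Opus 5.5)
The plan is to read everything off Theorem~\ref{thm-main} with \((r_1, r_2, r_3) = (1, 1, \rho)\), and then identify the one surviving integral as a power series in \(s = 1/\rho\).

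\emph{Step 1: reduce to one probability.} The hypothesis \(\rho \geq \sqrt{2}\) gives \(r_3^2 \geq r_1^2 + r_2^2\), so Corollary~\ref{cor-support} yields \(p_3 = 0\). This is the only place the hypothesis is used. By the symmetry \(r_1 = r_2\), Equation~\ref{eq-pk} gives
\[
p_1 = p_2 = \tfrac12 \Pr\!\left(\sin^2 X + \rho^2 \sin^2 Y > 1\right).
\]
Hence \(P(\text{obtuse}) = 1 - Q\), where \(Q = \Pr(\sin^2 X + \rho^2 \sin^2 Y < 1)\).

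\emph{Step 2: write \(Q\) as a single integral.} Condition on \(Y = y\). The event is empty unless \(\rho \sin y < 1\). When \(\rho \sin y < 1\), it reads \(\sin X < \sqrt{1 - \rho^2 \sin^2 y}\), that is \(X < \arccos(\rho \sin y)\). Therefore
\[
Q = \frac{4}{\pi^2} \int_0^{\arcsin(1/\rho)} \arccos(\rho \sin y)\, dy .
\]

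\emph{Step 3: transform the integral.} Substitute \(u = \rho \sin y\), so that \(dy = s\,du/\sqrt{1 - s^2 u^2}\). The integral becomes \(\int_0^1 \arccos(u)\, d\bigl[\arcsin(su)\bigr]\). Integrate by parts: the boundary term vanishes at both ends, because \(\arccos 1 = 0\) and \(\arcsin 0 = 0\). What remains is \(\int_0^1 \arcsin(su)/\sqrt{1 - u^2}\, du\). Setting \(u = \sin\theta\) turns this into
\[
\int_0^{\pi/2} \arcsin(s \sin\theta)\, d\theta .
\]

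\emph{Step 4: identify the series.} Expand \(\arcsin x = \sum_{n \geq 0} \frac{(2n)!}{4^n (n!)^2 (2n+1)}\, x^{2n+1}\) and use the Wallis integral \(\int_0^{\pi/2} \sin^{2n+1}\theta\, d\theta = \frac{4^n (n!)^2}{(2n+1)!}\). Each coefficient then collapses to \(s^{2n+1}/(2n+1)^2\), so the integral equals \(\chi_2(s)\) and \(Q = \frac{4}{\pi^2}\chi_2(1/\rho)\). All coefficients are positive and \(s \leq 1/\sqrt{2} < 1\), so termwise integration is justified by monotone convergence.

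I expect the main obstacle to be Step 3. One has to spot the substitution and the integration by parts that convert the awkward \(\arccos(\rho \sin y)\) over a variable range into the standard integral \(\int_0^{\pi/2} \arcsin(s \sin\theta)\, d\theta\). After that, the cancellation of the arcsine coefficients against the Wallis integrals is routine bookkeeping.
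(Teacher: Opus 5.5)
Your proof is correct and follows the paper's route. You get \(p_3 = 0\) from Corollary~\ref{cor-support} and \(P(\text{obtuse}) = 2p_1\) from Equation~\ref{eq-pk}, then evaluate with the arcsine Taylor series against Wallis' integrals. The only difference is that the paper conditions on \(X\) rather than \(Y\), which gives \(\frac{2}{\pi}\arcsin(\cos x/\rho)\) over the full range \((0,\pi/2)\) at once, so your Step 3 substitution and integration by parts (the step you expected to be the obstacle) only swaps the order of integration and can be skipped.
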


\emph{Proof.} Corollary~\ref{cor-support} gives \(p_3 = 0\). For \(p_1\)
(and by symmetry \(p_2\)), Equation~\ref{eq-pk} asks for
\(\Pr(\sin^2 X + \rho^2\sin^2 Y > 1) =
\Pr(\rho \sin Y > \cos X)\), and conditioning on \(X\) gives \[
p_1 = \frac{1}{2}\left(1 - \frac{4}{\pi^2}
  \int_0^{\pi/2}\arcsin\left(\frac{\cos x}{\rho}\right) dx\right) .
\] Expanding \(\arcsin\) in its Taylor series and integrating term by
term with Wallis' formula,
\(\int_0^{\pi/2}\cos^{2n+1}x \, dx = (2n)!! / (2n+1)!!\), collapses the
coefficients to \(\int_0^{\pi/2}\arcsin(s\cos x)\,dx = \chi_2(s)\).
\(\square\)

The per-vertex expression for \(p_1\) holds on the wider range
\(\rho \geq 1\), which is where \(\arcsin(\cos x/\rho)\) stays clear of
its truncation; the hypothesis \(\rho \geq \sqrt{2}\) additionally kills
the third term, and is therefore the single condition under which the
displayed total is valid. On the wider range \(\chi_2(1) = \pi^2/8\)
returns \(p_1 = 1/4\) at \(\rho = 1\), consistent with
Corollary~\ref{cor-equal}. Parity is what puts \(\chi_2\) here rather
than the dilogarithm itself:
\(\chi_2(s) = \tfrac12\left[\mathrm{Li}_2(s) - \mathrm{Li}_2(-s)\right]\)
is the odd part of \(\mathrm{Li}_2\), and \(\arcsin\) being an odd
function leaves only odd powers of \(s\) to integrate.

\begin{proposition}[Bounds]\protect\hypertarget{prp-bounds}{}\label{prp-bounds}

\(1/2 \leq P(\text{obtuse}) < 1\). The lower bound is attained exactly
when one radius is zero and the other two are equal, and
\(P(\text{obtuse}) \to 1\) as the radii separate.

\end{proposition}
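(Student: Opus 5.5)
The plan is to work with squared radii $a_k = r_k^2$ and write
$$q_k = \Pr(a_iU + a_jV > a_k),\qquad U,V \overset{\text{iid}}{\sim}\text{Arcsine}(0,1),$$
so that Equation~\ref{eq-arcsine} reads $P(\text{obtuse}) = \tfrac12(q_1+q_2+q_3)$. By symmetry of the formula, label the radii so that $a_1 \le a_2 \le a_3$; then $a_3>0$, and at most $a_1$ can vanish. The proof has four parts: the lower bound, its equality case, the strict upper bound, and the limit.

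\textbf{Lower bound by a coupling.} I will show $q_1 + q_2 \ge 1$. The tool is the reflection symmetry of the arcsine law: $1-U$ and $1-V$ are again iid arcsine. So I realize $q_1$ and $q_2$ on one probability space through the events
$$E_1 = \{a_2U + a_3V > a_1\},\qquad E_2 = \{a_1(1-U) + a_3(1-V) > a_2\},$$
which gives $\Pr(E_1)=q_1$ and $\Pr(E_2)=q_2$. Suppose both events fail. Adding the two reversed inequalities gives $a_3 + (a_2-a_1)U \le a_2$. Since $a_3 \ge a_2 \ge a_1$ and $U>0$ almost surely, this can only happen on a null set. Hence $\Pr(E_1\cup E_2)=1$, so $q_1+q_2 \ge 1$, and because $q_3 \ge 0$ we get $P(\text{obtuse}) \ge 1/2$.

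\textbf{Equality case.} Equality forces two things.
\begin{itemize}
\item $q_3 = 0$, which by Corollary~\ref{cor-support} means $a_3 \ge a_1 + a_2$.
\item $\Pr(E_1\cap E_2) = 0$.
\end{itemize}
If $a_1 > 0$, then $a_3 > a_2$. Take $U$ near $0$ and $V$ slightly above $a_1/a_3$. Then $E_1$ holds, and $E_2$ reduces approximately to $a_3 > a_2$, which is true. So $E_1\cap E_2$ contains an open set, and the arcsine density is positive on $(0,1)^2$, so this set has positive probability, a contradiction. Therefore $a_1 = 0$. Corollary~\ref{cor-center} then gives $P = 1 - \tfrac1\pi\arcsin(r_2/r_3)$, which equals $1/2$ exactly when $r_2 = r_3$. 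Conversely, that configuration attains $1/2$.

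\textbf{Upper bound and the limit.} For $P<1$, write
$$2 - \sum_k q_k = \sum_k \Pr(a_iU+a_jV < a_k).$$
The $k=3$ term is positive, because $a_3>0$ and $a_1U+a_2V$ has positive density near $0$. For the limit, let $r_1/r_2 \to 0$ and $r_2/r_3\to 0$.
\begin{itemize}
\item Eventually $a_3 \ge a_1+a_2$, so $q_3 = 0$.
\item $q_1 \ge \Pr(a_2U > a_1) = 1-\tfrac2\pi\arcsin(r_1/r_2) \to 1$.
\item $q_2 \ge \Pr(a_3V > a_2) \to 1$.
\end{itemize}
Hence $P \to 1$.

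I expect the main obstacle to be the lower bound. The naive estimates, which bound each $q_k$ separately using one variable, lose too much: at equal radii they give only $3/2$ in place of $2$. The point is to couple $q_1$ and $q_2$ through the reflection $U\mapsto 1-U$ so that their failure sets are essentially disjoint. If the particular pairing above did not close, I would try other sign choices, for example reflecting only $V$, and check which one makes the summed reverse inequalities contradict the ordering $a_3\ge a_2\ge a_1$. The equality analysis is then routine positivity of the arcsine density on open sets.
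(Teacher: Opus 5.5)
\textbf{The gap is in the strict upper bound.} Your identity is off by one. Since $1-q_k=\Pr(a_iU+a_jV\le a_k)$ and ties are null, summing over the three vertices gives
\[
\sum_{k=1}^{3}\Pr(a_iU+a_jV<a_k)=3-\sum_{k=1}^{3}q_k=3-2P(\text{obtuse}),
\]
not $2-\sum_k q_k$. So positivity of the $k=3$ term only yields $P(\text{obtuse})<3/2$.

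What $P<1$ actually requires is $\Pr(\text{acute})=\tfrac12\bigl(\sum_k\Pr(a_iU+a_jV\le a_k)-1\bigr)>0$. In words, the three lower tails must sum to strictly more than $1$, and no single term delivers that: at equal radii each one is exactly $1/2$. Within the formula, the case $a_3\ge a_1+a_2$ is easy, since then $q_3=0$ and $q_2<1$ because $a_2>0$. But when $a_3<a_1+a_2$ all three $q_k$ are positive, and you would need a genuine inequality that your reflection coupling does not obviously supply.

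The paper avoids the formula here. On the angle torus the law is uniform and the three vertex dot products are continuous, so the acute configurations form an open set. One explicit acute configuration per radius triple therefore gives it positive probability.
\begin{itemize}
\item With $r_3$ the largest and all radii positive and not all equal, take $P_1=(0,r_1)$, $P_2=(0,-r_2)$, $P_3=(r_3,0)$. The dot products are $r_1(r_1+r_2)$, $r_2(r_1+r_2)$ and $r_3^2-r_1r_2$, all positive.
\item Equal radii use three points $2\pi/3$ apart.
\item When $r_1=0<r_2\le r_3$, place the other two vertices at an angle $\theta$ with $0<\cos\theta<r_2/r_3$.
\end{itemize}

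The rest holds up.
\begin{itemize}
\item \emph{Lower bound.} This is the paper's argument: the same reflection and the same pair of events. Your $\Pr(E_1\cup E_2)=1$ is the paper's inclusion $\{bU+cV\le a\}\subseteq\{aU+cV<a+c-b\}$ up to a null set. One small hole: when $a_1=a_2=a_3$ your summed inequality reduces to $a_3\le a_2$ and holds identically, so the reason you give does not produce a null set. You need that both reversed inequalities are then equalities, i.e.\ $U+V=1$, which is null.
\item \emph{Equality case.} This is a different and tidier route than the paper's. Extracting $a_3\ge a_1+a_2$ from $q_3=0$ first removes the cases $c=b>a$ and $a=b=c$ that the paper treats separately. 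At $a_1=0$, reading the answer off Corollary~\ref{cor-center} replaces the paper's $\Pr(E_2\setminus E_1)$ argument. There is no circularity, since that corollary rests only on Equation~\ref{eq-pk} and Corollary~\ref{cor-support}.
\item \emph{Limit.} Letting both ratios tend to $0$ with all radii positive is a more general reading of ``separate'' than the paper's, which lets $r_1/r_2\to0$ with $r_3=0$.
\end{itemize}
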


\emph{Proof.} Write \(a \leq b \leq c\) for the sorted values of
\(r_1^2, r_2^2, r_3^2\), let \(U, V\) be independent arcsine variables
as in Equation~\ref{eq-arcsine}, and let \(q_a = \Pr(bU + cV > a)\),
\(q_b = \Pr(aU + cV > b)\), and \(q_c = \Pr(aU + bV > c)\) be the
corresponding terms of Equation~\ref{eq-arcsine}, so that
\(2P(\text{obtuse}) = q_a + q_b + q_c\).

The arcsine law is symmetric about \(1/2\), so \((U, V)\) and
\((1 - U, 1 - V)\) have the same distribution and \[
q_b = \Pr\left(a(1 - U) + c(1 - V) > b\right) = \Pr(aU + cV < a + c - b) .
\] Let \(E_1 = \{bU + cV \leq a\}\) and
\(E_2 = \{aU + cV < a + c - b\}\), so that
\(q_a + q_b = 1 - \Pr(E_1) + \Pr(E_2)\). On \(E_1\) we have
\(aU + cV \leq bU + cV \leq a \leq a + c - b\), with at least one
inequality strict unless \(a = b = c\); hence \(E_1 \subseteq E_2\) up
to a null set and \(q_a + q_b \geq 1\). Dropping \(q_c \geq 0\) gives
\(P(\text{obtuse}) \geq 1/2\). (When \(a = b = c\) the three terms are
each \(1/2\) and the bound is not tight.)

Equality forces \(q_c = 0\) and \(\Pr(E_2 \setminus E_1) = 0\). Suppose
\(a > 0\). If \(c > b\), take \(U\) close to \(1\) and \(V\) close to
\(0\) subject to \(cV > a(1 - U)\). Then
\(bU + cV > bU + a(1 - U) \geq a\), so the point is outside \(E_1\),
while \(aU + cV\) is close to \(a\), which is strictly below
\(a + c - b\), so it lies in \(E_2\). If instead \(c = b > a\), reverse
the constraint to \(cV < a(1 - U)\). Then
\(aU + cV < aU + a(1 - U) = a = a + c - b\) puts the point in \(E_2\),
while \(bU + cV\) is close to \(c > a\) and so is outside \(E_1\). Both
prescriptions describe sets of positive Lebesgue measure, on which the
arcsine density is positive, so \(\Pr(E_2 \setminus E_1) > 0\) in either
case.

That leaves \(a = b = c\) as the only configuration with \(a > 0\) not
yet excluded, and there \(q_c = 1/2\), so equality fails through the
dropped term instead. Hence \(a = 0\).

It is tempting to finish by saying that \(q_c = 0\) then forces
\(b = c\), but \(q_c = \Pr(bV > c)\) vanishes for \emph{every}
\(b \leq c\) once \(a = 0\), so it carries no information. The other
equality condition does the work. At \(a = 0\) the set
\(E_1 = \{bU + cV \leq 0\}\) is null, while \(E_2 = \{cV < c - b\}\) has
positive probability whenever \(b < c\); so
\(\Pr(E_2\setminus E_1) = 0\) forces \(b \geq c\), and with \(b \leq c\)
that gives \(b = c\). At \(a = 0\) with \(b = c\) we have \(q_a = 1\),
\(q_b = 0\), and \(q_c = 0\).

For the upper bound, fix the radii and work in the angle torus, where
the law is uniform. The three dot products are continuous in
\((\Theta_1, \Theta_2, \Theta_3)\), so the acute configurations form an
open subset of the torus, and exhibiting a single one gives it positive
probability. Order the radii so that \(r_3 = \max\).

If all three are positive and not all equal, place \[
P_1 = (0, r_1), \qquad P_2 = (0, -r_2), \qquad P_3 = (r_3, 0) .
\] The three vertex dot products are \(r_1(r_1 + r_2)\),
\(r_2(r_1 + r_2)\), and \(r_3^2 - r_1r_2\). The first two are positive,
and the third is positive because \(r_3 = \max\) gives
\(r_3^2 \geq r_1r_2\) with equality only when \(r_1 = r_2 = r_3\). If
instead all three are equal, three points \(2\pi/3\) apart give dot
products \(3r^2/2 > 0\). If one radius vanishes, say
\(r_1 = 0 < r_2 \leq r_3\), put the other two at angle \(\theta\) with
\(0 < \cos\theta < r_2/r_3\); the dot products are then
\(r_2r_3\cos\theta\), \(r_2(r_2 - r_3\cos\theta)\), and
\(r_3(r_3 - r_2\cos\theta)\), all positive. So \(P(\text{obtuse}) < 1\)
in every case, and Corollary~\ref{cor-center} shows values arbitrarily
close to \(1\), as \(r_1/r_2 \to 0\) with \(r_3 = 0\). \(\square\)

Name the extremal configuration: one vertex pinned at the center and the
other two on a common circle. There the triangle is isosceles, both base
angles are acute by construction, and the triangle is obtuse exactly
when the central angle exceeds \(\pi/2\), which happens half the time.
No arrangement of concentric circles does better than an even split.

At the other end of the range sits the equal-radius configuration, which
is a local maximum but not a smooth one.

\begin{proposition}[Local maximum at equal
radii]\protect\hypertarget{prp-local}{}\label{prp-local}

Normalize the squared radii to \(a_k = r_k^2\) with
\(a_1 + a_2 + a_3 = 3\) and write \(a_k = 1 + \alpha_k\). Then, as
\(\alpha \to 0\), \begin{equation}\phantomsection\label{eq-local}{
P(\text{obtuse}) = \frac{3}{4} - \frac{1}{4\pi^2}\sum_{k<l}
  (\alpha_k - \alpha_l)^2 \log\frac{1}{\lvert \alpha_k - \alpha_l \rvert}
  + O\!\left(\lvert\alpha\rvert^2\right) ,
}\end{equation} where a vanishing difference contributes nothing to the
sum. In particular \(3/4\) is a strict local maximum over radius ratios,
attained only at \(r_1 = r_2 = r_3\), and \(P\) is differentiable but
not twice differentiable there. The remainder is uniform in the
direction of \(\alpha\); Lemma~\ref{lem-endpoint} supplies the exact
endpoint decomposition on which that rests.

\end{proposition}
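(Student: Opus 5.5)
The plan is to work from the arcsine form, Equation~\ref{eq-arcsine}, with $a_k = 1+\alpha_k$, and to reduce each term to a one-dimensional integral in which the singular behaviour is visible. Write $G(v) = \tfrac{2}{\pi}\arcsin\sqrt{v}$ for the arcsine distribution function on $[0,1]$, extended by $0$ and $1$ outside that interval. Conditioning on $U$ gives
\[
2p_k = \Pr(a_iU + a_jV > a_k) = 1 - \mathbb{E}\,G\!\left(\frac{a_k - a_iU}{a_j}\right).
\]
At $\alpha = 0$ the argument is $1-U$, and the term equals $1/2$. Set $m(U) = (a_k - a_iU)/a_j$, so that $m(U) - (1-U) = \delta_k(U)$ is linear in $U$ and of size $O(|\alpha|)$.

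The next step is to split the $U$-range into an interior region and two endpoint layers. On the interior region, $m(U) \in [\eta, 1-\eta]$ for a cutoff $\eta$ of order $|\alpha|$, and I Taylor expand $G(1-U+\delta_k) = G(1-U) + G'(1-U)\,\delta_k + \tfrac12 G''(\xi)\,\delta_k^2$. The linear term integrates to $c\cdot(\text{linear form in }\alpha)$, because $\mathbb{E}\,G'(1-U)\,U$ and $\mathbb{E}\,G'(1-U)$ are finite; $G'(v)f(1-v)$ behaves like $1/(v(1-v))$ only after the $\delta$-weighting is accounted for, and this needs checking. The linear forms from the three $k$ should cancel, using $\sum\alpha_k = 0$ together with the symmetry $U \mapsto 1-U$, exactly as in the symmetric-difference computation in the proof of Proposition~\ref{prp-bounds}. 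The quadratic term carries $G''(v) \sim v^{-3/2}$ near the ends, multiplied by the arcsine density $f(u) \sim u^{-1/2}$ of $U$ near the matching end. Since $v = 1-u$ there, the product behaves like $1/w$ in the distance $w$ to the end. Integrated from $\eta$ outward, this produces $\delta^2\log(1/\eta)$, and this is the source of the $x^2\log(1/x)$ terms. The coefficient is read off from the endpoint constants $\tfrac{1}{\pi}$ in both $G'$ and $f$, and is the one to verify against $\tfrac{1}{4\pi^2}$.

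The endpoint layers, where $m(U)$ crosses $0$ or $1$ and the truncation activates, are handled by Lemma~\ref{lem-endpoint}. This is the step I expect to be the main obstacle. Near $U = 1$ the quantity $\delta_k(1) = (\alpha_k - \alpha_i)/a_j + O(\alpha^2)$ sets the layer width. Rescaling $U = 1 - |\alpha_k - \alpha_i|\,s$ should turn the layer contribution into $(\alpha_k-\alpha_i)^2$ times an $O(1)$ integral plus the exact logarithm matched to the interior cutoff. The choice of $\eta$ then cancels, leaving $\log(1/|\alpha_k - \alpha_l|)$ with pairwise differences rather than $\log(1/|\alpha|)$. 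The difference between the two is $O(\alpha^2)$, since $\log(|\alpha|/|\alpha_k-\alpha_l|)\cdot(\alpha_k-\alpha_l)^2$ is bounded by $C|\alpha|^2$. This also explains the convention that vanishing differences contribute nothing. Uniformity in direction comes from the fact that all error bounds depend only on $|\alpha|$.

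Finally I sum over $k$ and both endpoints, and collect each unordered pair $\{k,l\}$. Each pair appears from the ends of two of the three terms, and together these should give $-\tfrac{1}{4\pi^2}(\alpha_k-\alpha_l)^2\log(1/|\alpha_k-\alpha_l|)$. Strict local maximality then follows because the log term dominates the $O(|\alpha|^2)$ remainder whenever $\alpha \ne 0$ with $\sum\alpha_k = 0$. Differentiability holds because the correction is $o(|\alpha|)$. Twice-differentiability fails because $x^2\log(1/x)$ has unbounded second derivative at $0$.
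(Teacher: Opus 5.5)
Your architecture (smooth interior, two endpoint layers, pairing the ends that carry the same pair $\{k,l\}$) is the right one, and the closing deductions are fine. The gap is that the asymptotics you assign to the interior are wrong at exactly the two places that generate the logarithm. So the proposal never produces the $(\alpha_k-\alpha_l)^2\log(1/|\alpha_k-\alpha_l|)$ term or its coefficient.

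\emph{The first-order term.} Its coefficients are not finite: $G'(1-u)f(u) = 1/(\pi^2 u(1-u))$ exactly, so $\mathbb{E}\,G'(1-U)$ and $\mathbb{E}\,G'(1-U)U$ both diverge. The $\delta$-weighting does not help, because $\delta_k$ does not vanish at the ends. Writing $c$ for your cutoff, the linear term over $c \le U \le 1-c$ is $\pi^{-2}[\delta_k(0)+\delta_k(1)]\log(1/c) + O(|\alpha|)$, of order $|\alpha|\log(1/|\alpha|)$. This is the dominant singular piece. It cancels not through $\sum_k\alpha_k = 0$ and $U\mapsto 1-U$, but because the two ends carrying $\{k,l\}$ (one in $p_k$, one in $p_l$) bring $\alpha_k-\alpha_l$ and $\alpha_l-\alpha_k$ against the same logarithm.

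\emph{The quadratic term.} Here $G''(1-u)f(u) = (1-2u)/(2\pi^2u^2(1-u)^2)$ behaves like $w^{-2}$, not $w^{-1}$, since the exponents add to $-\tfrac32-\tfrac12=-2$. So the end value $\delta_k(0)^2$ against it gives a term of size $\delta_k(0)^2/c$ and no logarithm. Write $\delta_k(U) = \delta_k(0) - \lambda U$ with $\lambda = (a_i-a_j)/a_j$. The only logarithm in the quadratic term comes from the cross term $-2\delta_k(0)\lambda U$. Hence the logarithm at that end has coefficient $\pi^{-2}\delta_k(0)(1-\lambda/2)$ to second order, which is the paper's $\eta/\sqrt{\mu}$ with $\eta=\delta_k(0)$.

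The $\alpha^2\log$ term is what survives of these coefficients after the first-order cancellation, and it has two sources. One is the second-order part of the exact end values, e.g.\ $\delta_k(0) = (\alpha_k-\alpha_j)/(1+\alpha_j)$. The other is the factor $1-\lambda/2$. After pairing they give $(\alpha_k-\alpha_l)^2$ and $-\tfrac12(\alpha_k-\alpha_l)^2$, and the net $\tfrac12$ is what becomes $1/(4\pi^2)$. Reading a $\delta^2\log$ off the endpoint constants of $G'$ and $f$ cannot produce it.

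\emph{The cutoff at scale $|\alpha|$.} Near such a cutoff every Taylor order contributes at the same size, since $\int_c \delta^n w^{-n}\,dw \asymp |\alpha|^n c^{1-n} \asymp |\alpha|$. Your second-order Lagrange expansion therefore leaves a remainder of order $|\alpha|$, far above the $O(|\alpha|^2)$ you need, and it can be removed only by exact matching against the layers. The rescaled layer is also first order in $\alpha_k-\alpha_i$, not $(\alpha_k-\alpha_i)^2$ times an $O(1)$ integral.

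Lemma~\ref{lem-endpoint} does not rescue this. It is stated for a fixed $\delta$, and with $\delta^2\asymp|\alpha|$ its domain $|\eta|<\mu\delta^2$ (the lemma's $\eta$, not your cutoff) shrinks with $\alpha$. The analyticity of $A$ then no longer yields differentiability of the smooth part or uniformity of the remainder.

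The paper avoids all of this by fixing the cutoff in the angle variable. There the interior is analytic in $\alpha$, and every singular term sits in the exact integral $J(\mu,\eta)$, with singular part $\tfrac{\eta}{4\sqrt{\mu}}\log\tfrac{1}{|\eta|}$. The coefficient then follows from the pairing described above. You would also still owe a check that no genuine $\delta_k(0)^2\log$ term survives, since it is the same order as the retained term. The paper's appendix does this; in your variable it amounts to $G''(1-u)f(u)$ having no $u^{-1}$ term.
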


\emph{Proof.} Condition on \(X\) in Equation~\ref{eq-pk} and use
\(\Pr(\sin^2 Y \leq z) = \tfrac{2}{\pi}\arcsin\sqrt{z}\) to write
\(q_k = 2p_k\) as a single integral,
\begin{equation}\phantomsection\label{eq-arccos}{
q_k = 1 - \frac{4}{\pi^2} I_k , \qquad
I_k = \int_0^{\pi/2}
  \arccos\left(\min\left(1,\
  \sqrt{\left[\mu \sin^2 x - \eta\right]^{+}}\right)\right) dx ,
}\end{equation} with \(\mu = a_i/a_j\) and \(\eta = (a_k - a_j)/a_j\),
and \([\cdot]^{+}\) the positive part. At \(\alpha = 0\) the integrand
is \(\arccos(\sin x) = \pi/2 - x\) and \(I_k = \pi^2/8\).

The integrand of Equation~\ref{eq-arccos} is a smooth function of
\((\alpha, x)\) on any interval \([\delta, \pi/2 - \delta]\), so only
the two endpoints can produce behavior worse than
\(O(\lvert\alpha\rvert)\). Near \(x = 0\) both arguments of \(\arccos\)
are small, \(\arccos t - \arccos t_0 = t_0 - t + O(t^3)\), and
\(\sin x = x + O(x^3)\), so that endpoint contributes \[
\int_0^{\delta}\left(x - \sqrt{\left[\mu x^2 - \eta\right]^{+}}\right) dx
 = \frac{\eta}{4\sqrt{\mu}}\log\frac{1}{\lvert\eta\rvert}
   + O(\lvert\alpha\rvert)
 = \frac{\eta}{4}\left(1 - \frac{\lambda}{2}\right)
   \log\frac{1}{\lvert\eta\rvert} + O(\lvert\alpha\rvert) ,
\] where \(\lambda = \mu - 1 = (a_i - a_j)/a_j\); the logarithm appears
because the integrand is cut off at \(x \asymp \sqrt{\lvert\eta\rvert}\)
rather than at \(\lvert\eta\rvert\). Substituting
\(x \mapsto \pi/2 - x\) gives the same expression at the other endpoint
with \(i\) and \(j\) interchanged.

Now sum over \(k\) and over the two endpoints, which is a sum over
ordered pairs. Writing \(e_{kl} = \alpha_k - \alpha_l\), we have
\(\eta = e_{kj}(1 + O(\lvert\alpha\rvert))\) and
\(\lambda = e_{ij}(1 + O(\lvert\alpha\rvert))\), so the ordered pair
\((k, j)\) contributes
\(\tfrac14\log(1/\lvert e_{kj}\rvert)\left[e_{kj} - e_{kj}\alpha_j -
\tfrac12 e_{kj}e_{ij}\right]\) up to \(O(\lvert\alpha\rvert^2)\). Pair
\((k, j)\) with \((j, k)\), which share the logarithm and the third
index \(i\). Because \(e_{jk} = -e_{kj}\), the terms linear in \(e\)
cancel; the second terms combine to
\(-e_{kj}\alpha_j - e_{jk}\alpha_k = e_{kj}(\alpha_k - \alpha_j) = e_{kj}^2\);
and the third terms combine to
\(-\tfrac12\left(e_{kj}e_{ij} + e_{jk}e_{ik}\right)
= \tfrac12 e_{kj}(e_{ik} - e_{ij}) = -\tfrac12 e_{kj}^2\). Hence \[
\sum_{k=1}^{3} I_k = \frac{3\pi^2}{8}
  + \frac{1}{8}\sum_{k<l} e_{kl}^2 \log\frac{1}{\lvert e_{kl}\rvert}
  + \Lambda(\alpha) + O(\lvert\alpha\rvert^2) ,
\] where \(\Lambda\) collects the smooth contributions and is
differentiable at \(\alpha = 0\). Since \(P\) is symmetric in the three
radii, \(\nabla\Lambda(0)\) is parallel to \((1, 1, 1)\), and \(P\) is
constant in that direction by scale invariance, so \(\Lambda\) has no
linear term on the normalized slice \(\sum_k \alpha_k = 0\). Finally
\(2P(\text{obtuse}) = \sum_k q_k =
3 - \tfrac{4}{\pi^2}\sum_k I_k\), which is Equation~\ref{eq-local}.

Because \(t^2\log(1/t)\) dominates \(t^2\) as \(t \downarrow 0\), the
displayed sum controls the remainder, and it is strictly positive unless
all three \(\alpha_k\) agree. The same domination shows the second
directional derivative diverges in every direction other than the
dilation. \(\square\)

Two steps of that proof were quoted rather than carried out: the
uniformity of the remainder in the direction of \(\alpha\), and the
differentiability of the smooth part \(\Lambda\). Both follow from the
endpoint integral being elementary, so that its singular part separates
exactly instead of asymptotically. Section~\ref{sec-appendix} does the
separation, and settles a cubic correction that is not as negligible as
it looks.

Figure~\ref{fig-slice} shows the equal-radius maximum along one slice.
It is a local maximum only: pushing the radii apart drives the
probability toward \(1\), and \((1, 5, 25)\) already gives 0.9353.

The value \(3/4\) does not characterize equal radii. The level set
\(P(\text{obtuse}) = 3/4\) is a curve in the two-dimensional space of
radius ratios that passes through the equal-radius point; for instance
the triple \((1, 2, r_3)\) reaches \(3/4\) at \(r_3 =\) 2.9188.

The obtuse probability is also not monotone in the radii.
Figure~\ref{fig-slice} shows the slice \((1, 1, \rho)\): it rises to
\(3/4\) at \(\rho = 1\), falls to a local minimum at
\(\rho = \sqrt{2}\), and rises again toward \(1\). The corner at
\(\sqrt{2}\) is exactly where the third term of Equation~\ref{eq-main}
switches off, by Corollary~\ref{cor-support}, and the kink is a
consequence of that term hitting the boundary of its support rather than
anything smooth going on in the other two. The corner has an exact size,
and it is a jump in the derivative rather than in the value: just below
the threshold the arcsine density near the top of its support gives
\(p_3 \approx (2 - \rho^2)/(2\pi)\), so the right derivative exceeds the
left by \(\sqrt{2}/\pi\). Numerically the two one-sided slopes are
\(-0.19758\) and \(0.25258\), a jump of \(0.45016\) against
\(\sqrt{2}/\pi = 0.45016\). Figure~\ref{fig-contour} shows the full
two-parameter picture, with the same ridge running through it.

\begin{figure}

\centering{

\pandocbounded{\includegraphics[keepaspectratio]{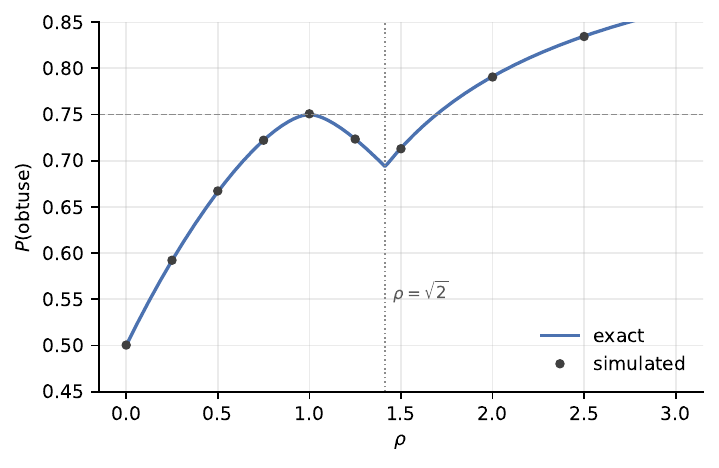}}

}

\caption{\label{fig-slice}Obtuse probability along the slice
\((1, 1, \rho)\). The curve is Equation~\ref{eq-main}; the points are
Monte Carlo estimates from \(10^6\) triangles each. The dashed lines
mark the equal-radius value \(3/4\) at \(\rho = 1\) and the threshold
\(\rho = \sqrt{2}\) beyond which the third vertex can no longer be the
obtuse one.}

\end{figure}%

\begin{figure}

\centering{

\pandocbounded{\includegraphics[keepaspectratio]{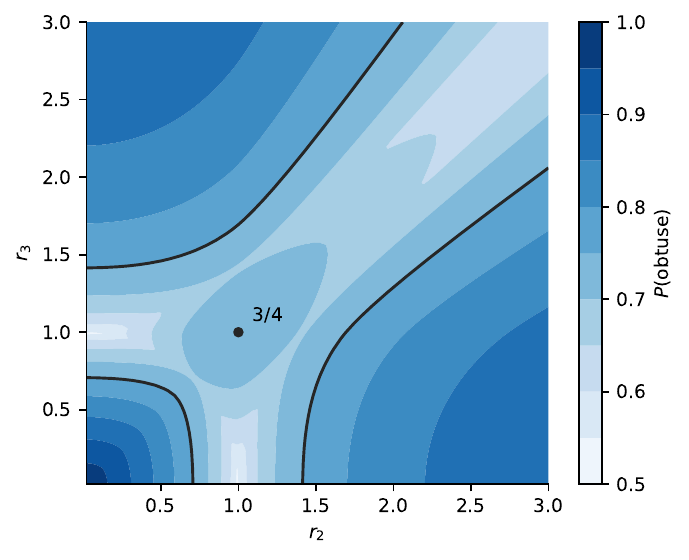}}

}

\caption{\label{fig-contour}Obtuse probability over the two-parameter
family with \(r_1 = 1\). The heavy contour is the level set \(3/4\),
which passes through the equal-radius point (marked) but is not confined
to it. The probability falls to \(1/2\) only in the corner where one
radius vanishes and the other two agree, and rises toward \(1\) as the
radii separate.}

\end{figure}%

Table~\ref{tbl-cases} collects the section. The first three rows are the
subfamilies where Equation~\ref{eq-main} closes, and the last two are
the range it takes over everything else; setting \(r_1 = r_2\) in the
second row returns \(1/2\), which is the minimum in the fourth.

\begin{longtable}[]{@{}
  >{\raggedright\arraybackslash}p{(\linewidth - 4\tabcolsep) * \real{0.3333}}
  >{\raggedright\arraybackslash}p{(\linewidth - 4\tabcolsep) * \real{0.3333}}
  >{\raggedright\arraybackslash}p{(\linewidth - 4\tabcolsep) * \real{0.3333}}@{}}
\caption{Closed forms and bounds for the special cases of
Equation~\ref{eq-main}.}\label{tbl-cases}\tabularnewline
\toprule\noalign{}
\begin{minipage}[b]{\linewidth}\raggedright
Configuration
\end{minipage} & \begin{minipage}[b]{\linewidth}\raggedright
\(P(\text{obtuse})\)
\end{minipage} & \begin{minipage}[b]{\linewidth}\raggedright
\end{minipage} \\
\midrule\noalign{}
\endfirsthead
\toprule\noalign{}
\begin{minipage}[b]{\linewidth}\raggedright
Configuration
\end{minipage} & \begin{minipage}[b]{\linewidth}\raggedright
\(P(\text{obtuse})\)
\end{minipage} & \begin{minipage}[b]{\linewidth}\raggedright
\end{minipage} \\
\midrule\noalign{}
\endhead
\bottomrule\noalign{}
\endlastfoot
\(r_1 = r_2 = r_3\) & \(3/4\) & Corollary~\ref{cor-equal} \\
\(r_3 = 0\), \(\ r_1 \leq r_2\) &
\(1 - \dfrac{1}{\pi}\arcsin\dfrac{r_1}{r_2}\) &
Corollary~\ref{cor-center} \\
\(r_1 = r_2 = 1\), \(\ r_3 = \rho \geq \sqrt{2}\) &
\(1 - \dfrac{4}{\pi^2}\chi_2\!\left(\dfrac{1}{\rho}\right)\) &
Corollary~\ref{cor-chi} \\
any admissible radii & \(1/2 \leq P < 1\) &
Proposition~\ref{prp-bounds} \\
\(r_3 = 0\), \(\ r_1 = r_2\) & \(1/2\), the minimum &
Proposition~\ref{prp-bounds} \\
\end{longtable}

\section{Every rotationally symmetric scheme is a
mixture}\label{sec-mixture}

Fixed concentric circles look like one sampling convention among the
many that Carroll's question admits, but they are the conditional kernel
for the whole rotationally symmetric class, which turns
Equation~\ref{eq-main} into a statement about every such convention at
once.

\begin{proposition}[Mixtures over the
radii]\protect\hypertarget{prp-mixture}{}\label{prp-mixture}

Let \(P_1, P_2, P_3\) be independent, each with a distribution invariant
under rotation about \(O\), and let \(R_k = \lVert P_k \rVert\) with
\(\Pr(R_k = 0) = 0\) for at least two of the three. Writing
\(P_{\text{obt}}(r_1, r_2, r_3)\) for the fixed-radius probability of
Equation~\ref{eq-main},
\begin{equation}\phantomsection\label{eq-mixture}{
P(\text{obtuse}) = \mathbb{E}\left[P_{\text{obt}}(R_1, R_2, R_3)\right] .
}\end{equation} Consequently the obtuse probabilities achievable across
the entire class are exactly the interval \([1/2, 1)\).

\end{proposition}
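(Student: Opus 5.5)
The plan is to condition on the radii and reduce to the fixed-radius formula. For a rotation-invariant law, the polar decomposition \(P_k = R_k(\cos\Theta_k, \sin\Theta_k)\) can be chosen so that \(\Theta_k\) is uniform on \([0,2\pi)\) and independent of \(R_k\). Concretely, I would let \(\Theta'\) be an independent uniform angle and note that rotating \(P_k\) by \(\Theta'\) leaves its law unchanged, so we may assume \(P_k\) already has this product form. On the event \(\{R_k = 0\}\) the angle is irrelevant. Independence across \(k\) then makes \((\Theta_1,\Theta_2,\Theta_3)\) iid uniform and independent of \((R_1,R_2,R_3)\).

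Conditionally on \((R_1,R_2,R_3) = (r_1,r_2,r_3)\), the configuration is exactly the setup of Section~\ref{sec-setup}. The hypothesis that at most one \(R_k\) can vanish with positive probability ensures that almost surely at most one radius is zero, so the triple is admissible. The obtuse event is a Borel set of the angle torus for each radius triple, and it is jointly measurable in radii and angles, since it is defined by continuous dot products. Fubini then gives
\[
P(\text{obtuse}) = \mathbb{E}\left[\Pr(\text{obtuse}\mid R_1,R_2,R_3)\right] = \mathbb{E}\left[P_{\text{obt}}(R_1,R_2,R_3)\right].
\]
The null events handled in Lemma~\ref{lem-additive} (right angles and degenerate triangles) stay null after mixing, because they are null for every admissible radius triple. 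This proves Equation~\ref{eq-mixture}.

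For the range statement, Proposition~\ref{prp-bounds} gives \(1/2 \le P_{\text{obt}} < 1\) pointwise on admissible triples. Taking expectations gives \(1/2 \le P(\text{obtuse})\). The strict bound \(P(\text{obtuse})<1\) holds because the expectation of a function that is strictly less than \(1\) everywhere is strictly less than \(1\): if \(\mathbb{E}[1-P_{\text{obt}}]=0\), then \(1-P_{\text{obt}}=0\) almost surely, which is impossible.

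For attainment, every value in \([1/2,1)\) is reached by degenerate (point-mass) radii, since fixed circles are themselves rotationally symmetric schemes. By Corollary~\ref{cor-center}, \(1-\pi^{-1}\arcsin(r_1/r_2)\) sweeps \((1/2,1)\) as \(r_1/r_2\) runs over \((0,1)\), and the value \(1/2\) is attained at \(r_1=r_2\). In that case one radius is zero, but only one, so the triple is admissible. I expect the only step needing care to be the first one, the independence of angle and modulus under rotation invariance.
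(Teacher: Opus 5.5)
Your proof is correct. For the mixture identity and both bounds it follows the paper's argument: the polar factorization with the angle uniform and independent of the modulus, conditioning on the radii, the pointwise bounds of Proposition~\ref{prp-bounds}, and the fact that the a.s.\ positive, bounded variable \(1 - P_{\text{obt}}\) has strictly positive expectation. Beyond what the paper writes, you justify the factorization rather than asserting it: rotating each \(P_k\) by an independent uniform angle preserves the joint law, by independence. You also do the measurability and null-set bookkeeping.

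The one real difference is the attainment step. The paper runs the intermediate value theorem along the slice \((1,1,\rho)\). That needs continuity of \(P_{\text{obt}}\) in \(\rho\), which the paper takes as evident, plus the two endpoint values from Corollaries~\ref{cor-center} and~\ref{cor-chi}. You instead read the whole range off the closed form of Corollary~\ref{cor-center}. It is strictly decreasing in \(r_1/r_2\) and maps \((0,1]\) onto \([1/2,1)\), so no continuity argument is needed.

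The paper's slice gains one thing in exchange: every value in \((1/2,1)\) is realized with all three radii positive, without pinning a vertex at the center. Your family keeps \(r_3 = 0\) throughout. The statement does not require that refinement, so your route is the more economical proof of what is claimed.
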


\emph{Proof.} A rotationally invariant law on the plane factors as
\(P_k = R_k(\cos\Theta_k, \sin\Theta_k)\) with \(\Theta_k\) uniform on
\([0, 2\pi)\) and independent of \(R_k\). Independence across \(k\)
makes \((\Theta_1, \Theta_2, \Theta_3)\) uniform on the torus and
independent of \((R_1, R_2, R_3)\), so conditionally on the radii the
configuration is exactly the one in Section~\ref{sec-setup}.
Conditioning and averaging gives Equation~\ref{eq-mixture}.

For the range, Proposition~\ref{prp-bounds} confines the integrand to
\([1/2, 1)\), so \(P(\text{obtuse}) \geq 1/2\) for every scheme in the
class. The upper end needs a little more than the convex hull, whose
closure would readmit \(1\): since \(1 - P_{\text{obt}}(R_1, R_2, R_3)\)
is strictly positive almost surely and bounded, its expectation is
strictly positive, so \(P(\text{obtuse}) < 1\). Both ends are approached
by deterministic radii: along the slice \((1, 1, \rho)\) the probability
is continuous in \(\rho\), equals \(1/2\) at \(\rho = 0\) by
Corollary~\ref{cor-center}, and tends to \(1\) as \(\rho \to \infty\) by
Corollary~\ref{cor-chi}, so the intermediate value theorem supplies
everything between. \(\square\)

The Gaussian convention is the case to test this against, and
Equation~\ref{eq-mixture} does better than reproduce its value: it
evaluates in closed form.

\begin{corollary}[The Gaussian
convention]\protect\hypertarget{cor-gauss}{}\label{cor-gauss}

Let \(R_1, R_2, R_3\) be independent Rayleigh variables, so that the
three vertices are independent standard bivariate normals. Then
\(\mathbb{E}\left[P_{\text{obt}}(R_1, R_2, R_3)\right] = 3/4\).

\end{corollary}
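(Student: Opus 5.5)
The plan is to compute each term of the mixture separately and then use symmetry. By Equation~\ref{eq-arcsine}, \(P_{\text{obt}} = \tfrac12\sum_k \Pr(a_iU + a_jV > a_k)\) with \(a_k = r_k^2\). Under the Rayleigh law the squared radii \(A_k = R_k^2\) are independent exponentials with mean \(2\); by scale invariance we may take them to be standard exponentials. The three terms of the sum are exchangeable, so
\[
\mathbb{E}[P_{\text{obt}}] = \tfrac32 \Pr(A_1U + A_2V > A_3),
\]
where \(U, V\) are iid arcsine and independent of the \(A\)'s. It therefore suffices to show \(\Pr(A_1U + A_2V > A_3) = 1/2\).

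Conditioning on everything except \(A_3\) and using the exponential tail gives
\[
\Pr(A_3 < A_1U + A_2V) = 1 - \mathbb{E}\left[e^{-A_1U - A_2V}\right].
\]
Integrating out \(A_1\) and \(A_2\) then yields
\[
\mathbb{E}\left[e^{-A_1U}\right] = \mathbb{E}\left[\frac{1}{1+U}\right], \qquad \mathbb{E}\left[e^{-A_2V}\right] = \mathbb{E}\left[\frac{1}{1+V}\right].
\]
By independence the expectation factors, so the quantity needed is \(1 - \left(\mathbb{E}\left[1/(1+U)\right]\right)^2\).

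The one real computation is \(\mathbb{E}\left[1/(1+U)\right]\) for \(U = \sin^2 X\) with \(X\) uniform on \((0, \pi/2)\). This is
\[
\frac{2}{\pi}\int_0^{\pi/2} \frac{dx}{1+\sin^2 x}.
\]
The standard integral \(\int_0^{\pi/2} dx/(1 + c\sin^2 x) = \pi/(2\sqrt{1+c})\), obtained with the substitution \(t = \tan x\), gives the value \(1/\sqrt2\). Hence \(\Pr(A_1U + A_2V > A_3) = 1 - \tfrac12 = \tfrac12\), and
\[
\mathbb{E}[P_{\text{obt}}] = \tfrac32 \cdot \tfrac12 = \tfrac34.
\]

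This step also explains why the Gaussian case closes: the exponential law of the squared Rayleigh radius turns the arcsine threshold probability into a product of Laplace transforms. The main obstacle is not the integral but the bookkeeping. One check is that the normalization of the exponentials is immaterial by scale invariance. Another is that the factor \(\tfrac12\) in Equation~\ref{eq-main} and the threefold symmetry are applied correctly, which means confirming that each \(p_k\) averages to \(1/4\). If that check failed, I would fall back on verifying the per-vertex identity \(\mathbb{E}[p_k] = 1/4\) directly through Equation~\ref{eq-pk}, conditioning on \(X\) and \(Y\) as above.
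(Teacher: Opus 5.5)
Your proposal is correct and follows the paper's proof step for step: exchangeability reduces the sum to $\tfrac32\Pr(E_1U+E_2V>E_3)$, the exponential tail and Laplace transforms give $1-\mathbb{E}[1/(1+U)]^2$, and $\mathbb{E}[1/(1+U)]=1/\sqrt2$. The only difference is cosmetic. You evaluate that expectation in the angle variable with $t=\tan x$, whereas the paper quotes the equivalent integral against the arcsine density $1/(\pi\sqrt{u(1-u)})$.
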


\emph{Proof.} For a standard bivariate normal \(R_k^2/2\) is
\(\mathrm{Exp}(1)\), and only ratios matter, so set \(E_k = R_k^2/2\)
and replace \(R_k^2\) by \(E_k\) throughout. By
Equation~\ref{eq-arcsine} and exchangeability of the \(E_k\), \[
P(\text{obtuse}) = \frac12 \sum_{k} \Pr(E_i U + E_j V > E_k)
  = \frac32 \Pr(E_1 U + E_2 V > E_3) .
\] Condition on \(U, V, E_1, E_2\). Since \(\Pr(E_3 > s) = e^{-s}\) and
\(\mathbb{E}[e^{-uE}] = 1/(1+u)\) for \(E \sim \mathrm{Exp}(1)\), \[
\Pr(E_1 U + E_2 V > E_3)
 = 1 - \mathbb{E}\left[e^{-E_1 U}\right]\mathbb{E}\left[e^{-E_2 V}\right]
 = 1 - \mathbb{E}\left[\tfrac{1}{1+U}\right]^2 .
\] For \(U\) arcsine on \([0,1]\) the standard integral \[
\int_0^1 \frac{du}{\pi\sqrt{u(1-u)}\,(u + a)} = \frac{1}{\sqrt{a(a+1)}}
\] gives \(\mathbb{E}[1/(1+U)] = 1/\sqrt{2}\) at \(a = 1\). The inner
probability is therefore \(1 - 1/2 = 1/2\), and
\(P(\text{obtuse}) = 3/4\). \(\square\)

This supplies a link the literature reports as missing. Edelman and
Strang (2015) note in their Section 1.1, and again in Section 3.6, that
the uniform-angle convention and the Gaussian convention both give
\(3/4\), and that they are ``not aware of an argument that links the
`angle picture' with the normal distribution''; they point to Portnoy
(1994) for related discussion of how often \(3/4\) arises. By the
identification in Section~\ref{sec-intro} the angle picture is the
equal-radius member of the family that Proposition~\ref{prp-mixture}
integrates, the Gaussian convention is the integral, and
Corollary~\ref{cor-gauss} evaluates it. The two agree because
\(\mathbb{E}[1/(1+U)] = 1/\sqrt{2}\).

Two cautions on how much that says. The equal-radius configuration
carries no mass under the Rayleigh law, so it is a member of the family
of kernels rather than a component of positive weight. And what the
computation matches is the obtuse probability, not the two shape
measures, which genuinely differ: the Gaussian law is far from uniform
on angle space, as Edelman and Strang (2015) emphasize.

Simulation confirms both sides: \(4 \times 10^6\) Gaussian triangles
give 0.7496 (standard error 0.0002), and averaging
Equation~\ref{eq-main} over 20,000 Rayleigh radius triples gives 0.7495
(standard error 0.0005). What makes Corollary~\ref{cor-gauss} surprising
is the spread of what is being averaged. Over those same draws the
fixed-radius probability ranged from 0.5141 to 0.9856, so a function
that swings across nearly half the unit interval averages back to
precisely the value it takes when the three radii coincide.

\section{Numerical evidence}\label{sec-numeric}

\subsection{Against the definition}\label{against-the-definition}

The simulation is a direct transcription of the definition: sample three
angles, form the three vertices, and test the sign of the three dot
products. No part of the derivation is reused, so agreement is a genuine
check.

\begin{Shaded}
\begin{Highlighting}[]
\NormalTok{theta }\OperatorTok{=}\NormalTok{ rng.uniform(}\DecValTok{0}\NormalTok{, }\DecValTok{2} \OperatorTok{*}\NormalTok{ np.pi, size}\OperatorTok{=}\NormalTok{(n, }\DecValTok{3}\NormalTok{))}
\NormalTok{pts }\OperatorTok{=}\NormalTok{ np.stack([radii }\OperatorTok{*}\NormalTok{ np.cos(theta), radii }\OperatorTok{*}\NormalTok{ np.sin(theta)], axis}\OperatorTok{={-}}\DecValTok{1}\NormalTok{)}
\NormalTok{np.mean(((pts[:, i] }\OperatorTok{{-}}\NormalTok{ pts[:, k]) }\OperatorTok{*}\NormalTok{ (pts[:, j] }\OperatorTok{{-}}\NormalTok{ pts[:, k])).}\BuiltInTok{sum}\NormalTok{(}\DecValTok{1}\NormalTok{) }\OperatorTok{\textless{}} \DecValTok{0}\NormalTok{)}
\end{Highlighting}
\end{Shaded}

Table~\ref{tbl-validation} compares Equation~\ref{eq-main} against
\(n = 2 \times 10^6\) simulated triangles for a range of radius triples,
including the degenerate case with a vertex at the center and cases
where one circle dominates. The last column is the standardized
difference, \((\hat{p} - p)\sqrt{n / (p(1-p))}\), which should behave
like a standard normal draw, and SE is the standard error of the
simulated estimate.

\begin{longtable}[]{@{}lrrrr@{}}

\caption{\label{tbl-validation}Exact obtuse probabilities from
Equation~\ref{eq-main} against Monte Carlo estimates from
\(n = 2 \times 10^6\) triangles per configuration.}

\tabularnewline

\toprule\noalign{}
\((r_1, r_2, r_3)\) & Exact & Simulated & SE & \(z\) \\
\midrule\noalign{}
\endhead
\bottomrule\noalign{}
\endlastfoot
1, 1, 1 & 0.750000 & 0.749823 & 0.000306 & -0.58 \\
1, 1, 0.5 & 0.666280 & 0.666729 & 0.000333 & +1.35 \\
1, 1, 1.2 & 0.731314 & 0.731217 & 0.000313 & -0.31 \\
1, 1, 2 & 0.791146 & 0.791095 & 0.000287 & -0.17 \\
1, 2, 3 & 0.757947 & 0.757672 & 0.000303 & -0.90 \\
2, 3, 4 & 0.711715 & 0.711651 & 0.000320 & -0.20 \\
0.5, 0.8, 1 & 0.690747 & 0.690962 & 0.000327 & +0.66 \\
1, 5, 25 & 0.935280 & 0.935172 & 0.000174 & -0.62 \\
1, 2, 0 & 0.833333 & 0.833414 & 0.000264 & +0.30 \\
0.3, 1, 1.05 & 0.604618 & 0.604937 & 0.000346 & +0.92 \\

\end{longtable}

All ten standardized differences sit within the range expected from ten
independent normal draws. The package's test suite compares the three
per-vertex probabilities separately, which is the stronger check: it
exercises not only Equation~\ref{eq-main} but also the decomposition in
Lemma~\ref{lem-additive} and the individual terms of
Equation~\ref{eq-pk}.

\subsection{The expansion at equal
radii}\label{the-expansion-at-equal-radii}

Table~\ref{tbl-local} checks Equation~\ref{eq-local} directly. The ratio
of the exact deficit to the predicted one approaches \(1\) slowly, like
\(1 + c/\log(1/\varepsilon)\), which is the signature of an
\(O(\lvert\alpha\rvert^2)\) remainder sitting under an
\(\lvert\alpha\rvert^2\log(1/\lvert\alpha\rvert)\) leading term.

\begin{longtable}[]{@{}lrrrr@{}}

\caption{\label{tbl-local}The expansion Equation~\ref{eq-local} against
exact values, for three perturbation directions
\(\alpha = \varepsilon d\) of the squared radii.}

\tabularnewline

\toprule\noalign{}
\(d\) & \(\varepsilon\) & Exact deficit & Equation~\ref{eq-local} &
Ratio \\
\midrule\noalign{}
\endhead
\bottomrule\noalign{}
\endlastfoot
\((0, 0, 1)\) & 0.01 & 3.039e-05 & 2.333e-05 & 1.3028 \\
\((0, 0, 1)\) & 0.001 & 4.202e-07 & 3.500e-07 & 1.2009 \\
\((0, 0, 1)\) & 0.0001 & 5.369e-09 & 4.666e-09 & 1.1507 \\
\((1, -1, 0)\) & 0.01 & 8.404e-05 & 6.297e-05 & 1.3347 \\
\((1, -1, 0)\) & 0.001 & 1.190e-06 & 9.796e-07 & 1.2151 \\
\((1, -1, 0)\) & 0.0001 & 1.540e-08 & 1.330e-08 & 1.1585 \\
\((3, -1, -2)\) & 0.01 & 4.818e-04 & 3.318e-04 & 1.4519 \\
\((3, -1, -2)\) & 0.001 & 7.247e-06 & 5.768e-06 & 1.2564 \\
\((3, -1, -2)\) & 0.0001 & 9.693e-08 & 8.218e-08 & 1.1795 \\

\end{longtable}

\subsection{The containment invariant}\label{the-containment-invariant}

Proposition~\ref{prp-center} makes a prediction that is easy to check
directly and easy to disbelieve: the triangle surrounds the origin with
probability \(1/4\) no matter how far apart the radii are pulled.

Simulation bears it out. Over \((1, 1, 1)\), \((1, 1, 0.5)\), and
\((1, 5, 25)\), whose obtuse probabilities differ by more than \(0.26\),
the origin fell inside the triangle in 0.2493, 0.2499, and 0.2504 of
\(10^6\) draws each.

\section{Software}\label{sec-software}

The \texttt{rtcc} package evaluates Equation~\ref{eq-quadrature} with
adaptive quadrature and the kinks supplied as break points, and includes
the simulator used above. Radii are unitless, and a radius of zero is
allowed.

\begin{Shaded}
\begin{Highlighting}[]
\ExtensionTok{$}\NormalTok{ uv run rtcc 1 1 1}
\ExtensionTok{radii}\NormalTok{            1, 1, 1}
\ExtensionTok{P}\ErrorTok{(}\ExtensionTok{obtuse}\KeywordTok{)}        \ExtensionTok{0.750000}
  \ExtensionTok{at}\NormalTok{ vertex 1    0.250000}
  \ExtensionTok{at}\NormalTok{ vertex 2    0.250000}
  \ExtensionTok{at}\NormalTok{ vertex 3    0.250000}

\ExtensionTok{$}\NormalTok{ uv run rtcc 1 2 3 }\AttributeTok{{-}{-}simulate}\NormalTok{ 1000000 }\AttributeTok{{-}{-}seed}\NormalTok{ 1}
\ExtensionTok{radii}\NormalTok{            1, 2, 3}
\ExtensionTok{P}\ErrorTok{(}\ExtensionTok{obtuse}\KeywordTok{)}        \ExtensionTok{0.757947}
  \ExtensionTok{at}\NormalTok{ vertex 1    0.472162}
  \ExtensionTok{at}\NormalTok{ vertex 2    0.285784}
  \ExtensionTok{at}\NormalTok{ vertex 3    0.000000}
\ExtensionTok{simulated}\NormalTok{        0.758088 }\ErrorTok{(}\ExtensionTok{SE}\NormalTok{ 0.000428}\KeywordTok{)}
\end{Highlighting}
\end{Shaded}

The \texttt{-\/-json} flag emits the same values as JavaScript Object
Notation (JSON) for scripting, and the Python interface exposes
\texttt{p\_obtuse}, \texttt{p\_vertices}, and \texttt{simulate}
directly. The accuracy quoted in Section~\ref{sec-main} is checked by
the tests \texttt{test\_vertex\_at\_the\_center} and
\texttt{test\_two\_equal\_radii\_closed\_form}, which compare the
quadrature against Corollary~\ref{cor-center} and
Corollary~\ref{cor-chi}; they assert agreement to \(10^{-9}\), well
inside the few units in the last place actually observed. Every number
and figure in this paper is computed at build time from the package
rather than transcribed. Source and tests are at
\url{https://github.com/bgreenwell/rtcc}.

\section{Discussion}\label{sec-discussion}

Giving each vertex its own circle keeps everything that made the
circular Pillow Problem well posed and adds two free parameters. The
answer stays elementary: Equation~\ref{eq-main} is a sum of three tail
probabilities for weighted sums of independent arcsine variables, and
the classical \(3/4\) is the point where all three thresholds coincide
and each tail is exactly one half.

The geometry that drops out of the algebra is the part I find most
useful. Proposition~\ref{prp-center} isolates what the generalization
actually breaks: the probability that the triangle surrounds the common
center never moves off \(1/4\), and it is the identification of that
center with the circumcenter, not the counting argument behind it, that
equal radii were quietly supplying. Corollary~\ref{cor-support} says the
obtuse angle can only sit at a vertex whose circle does not dominate the
other two in the Pythagorean sense, which is why the sum usually has two
nonzero terms rather than three. Proposition~\ref{prp-bounds} says no
concentric arrangement can make obtuse triangles rarer than even odds,
and identifies the single configuration that achieves it.

The agreement between the circular and Gaussian conventions, which
Edelman and Strang (2015) record as lacking a satisfying theoretical
link, is a consequence of the mixture identity and a single arcsine
integral. That is Corollary~\ref{cor-gauss}, and it is the result I
would lead with. Proposition~\ref{prp-mixture} is why the fixed-radius
formula is worth more than the family it was derived for. Carroll's
question has no answer until a convention is named, and the mixture
identity says that naming a rotationally symmetric convention is the
same as naming a distribution for the radii. The floor of \(1/2\) then
applies to all of them at once, and the agreement between the circle and
Gaussian conventions becomes a fact about an average rather than a
coincidence between two special cases.

Replacing the circles by spheres in \(\mathbb{R}^d\) changes the
conditional half-space probability in Equation~\ref{eq-conditional} from
an arcsine to an incomplete beta. Two parts of that extension should be
distinguished. The mixture identity Proposition~\ref{prp-mixture} is a
routine transcription: its proof uses only that a rotationally invariant
law factors into a radius and an independent uniform direction, which
holds in every dimension, so the conditional-kernel reading of the
fixed-radius family carries over unchanged. What is genuinely open is
the fixed-radius formula itself, since Lemma~\ref{lem-omega} and
Lemma~\ref{lem-signed} are planar arguments and I do not know what
replaces the law of sines on a sphere. The target is known in the
Gaussian case, where Edelman and Strang (2015) give
\(3(1 - I(3/4, d/2, d/2))\) and random triangles become acute with
probability tending to one as \(d\) grows (Eisenberg and Sullivan 1996),
so a \(d\)-dimensional analogue of Equation~\ref{eq-main} would have
something definite to reproduce. Dropping independence between the three
vertices is the other direction, and the harder one, since
Lemma~\ref{lem-additive} is all that survives.

\appendix

\section{The endpoint expansion}\label{sec-appendix}

This appendix supplies the two steps Proposition~\ref{prp-local} quoted,
and disposes of a cubic correction that enters at the same order as the
term Equation~\ref{eq-local} keeps.

\begin{lemma}[Exact endpoint
decomposition]\protect\hypertarget{lem-endpoint}{}\label{lem-endpoint}

Fix \(\delta > 0\) and let \[
J(\mu, \eta) = \int_0^{\delta}
  \left(x - \sqrt{\left[\mu x^2 - \eta\right]^{+}}\right) dx .
\] Then for \(\lvert\eta\rvert < \mu\delta^2\),
\begin{equation}\phantomsection\label{eq-endpoint}{
J(\mu, \eta) = \frac{\eta}{4\sqrt{\mu}}\log\frac{1}{\lvert\eta\rvert}
  + A(\mu, \eta) ,
}\end{equation} where \(A\) is real analytic on a neighborhood of
\((1, 0)\).

\end{lemma}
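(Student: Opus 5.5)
The plan is to compute \(J\) in closed form from the elementary antiderivative of \(\sqrt{\mu x^2 - \eta}\), and then read off the splitting. Write \(s = \sqrt{\mu\delta^2 - \eta}\), which is positive under the hypothesis \(\lvert\eta\rvert < \mu\delta^2\). Then \(\int_0^\delta x\,dx = \delta^2/2\), and the work is in the second integral. Its antiderivative is
\[
F(x) = \frac{x}{2}\sqrt{\mu x^2 - \eta} - \frac{\eta}{2\sqrt{\mu}}\log\left(\sqrt{\mu}\,x + \sqrt{\mu x^2 - \eta}\right),
\]
valid on any interval where the radicand is nonnegative. This can be checked by differentiating, using \(\tfrac{d}{dx}\log(\sqrt\mu x + \sqrt{\mu x^2-\eta}) = \sqrt{\mu}/\sqrt{\mu x^2-\eta}\).

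I split by the sign of \(\eta\), since that decides where the positive part switches on.

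If \(\eta > 0\), the integrand \(\sqrt{[\mu x^2-\eta]^+}\) vanishes for \(x < x_0 = \sqrt{\eta/\mu}\). At \(x_0\) the first term of \(F\) is zero and the logarithm's argument is \(\sqrt{\eta}\). So \(F(x_0) = -\tfrac{\eta}{4\sqrt\mu}\log\eta\), and the integral is \(F(\delta) - F(x_0)\).

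If \(\eta < 0\), the radicand is positive on all of \([0,\delta]\), and the lower limit is \(x = 0\). There the logarithm's argument is \(\sqrt{\lvert\eta\rvert}\), so \(F(0) = -\tfrac{\eta}{4\sqrt\mu}\log\lvert\eta\rvert\).

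In both cases the lower-limit contribution is \(-\tfrac{\eta}{4\sqrt\mu}\log\lvert\eta\rvert\), by the same expression. Subtracting from \(\delta^2/2\) gives, uniformly in the sign of \(\eta\),
\[
J(\mu,\eta) = \frac{\eta}{4\sqrt\mu}\log\frac{1}{\lvert\eta\rvert} + A(\mu,\eta),
\qquad
A(\mu,\eta) = \frac{\delta^2}{2} - \frac{\delta}{2}\,s + \frac{\eta}{2\sqrt\mu}\log\left(\sqrt\mu\,\delta + s\right).
\]
At \(\eta = 0\) the integral is simply \(\delta^2/2 - \sqrt\mu\,\delta^2/2\). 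This equals \(A(\mu,0)\), with the log term read as zero, so Equation~\ref{eq-endpoint} holds there too.

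It remains to check that \(A\) is real analytic near \((1,0)\).

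\begin{itemize}
\item At \((1,0)\) we have \(\mu\delta^2 - \eta = \delta^2 > 0\), so \(s\) is the square root of a positive polynomial in \((\mu,\eta)\) and is analytic near that point.
\item The quantity \(\sqrt\mu\) is analytic for \(\mu\) near \(1\), and so is \(1/\sqrt\mu\).
\item The argument \(\sqrt\mu\,\delta + s\) is near \(2\delta > 0\), so its logarithm is analytic.
\end{itemize}

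Sums and products of analytic functions are analytic, which finishes the argument.

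The one point needing care is the branch bookkeeping at the lower limit. The two cases use different cut-off points, \(x_0\) versus \(0\), and one must confirm that both produce exactly \(\tfrac{\eta}{4\sqrt\mu}\log(1/\lvert\eta\rvert)\), with the same coefficient and no leftover constant such as \(\log 2\) that differs between the signs. The computation above shows the constants match: at \(x_0\) the argument is \(\sqrt\mu x_0 = \sqrt\eta\), and at \(0\) it is \(\sqrt{\lvert\eta\rvert}\). Once that is verified, the singular part is separated exactly rather than asymptotically, which is what Proposition~\ref{prp-local} needed.
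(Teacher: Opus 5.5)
Your proposal is correct and follows the paper's proof. It uses the same antiderivative \(F\), splits at \(x_0=\sqrt{\eta/\mu}\) for \(\eta>0\) and at \(x=0\) for \(\eta<0\), and identifies \(A=\delta^2/2-F(\delta)\), reading off analyticity from \(\sqrt{\mu\delta^2-\eta}\) and the logarithm; you also spell out the \(\eta<0\) and \(\eta=0\) cases, which the paper leaves implicit.
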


\emph{Proof.} The integrand is elementary. Writing
\(F(x) = \tfrac{x}{2}\sqrt{\mu x^2 - \eta}
 - \tfrac{\eta}{2\sqrt\mu}\log\!\left(\sqrt\mu\, x + \sqrt{\mu x^2 - \eta}\right)\)
for an antiderivative of \(\sqrt{\mu x^2 - \eta}\), integration for
\(\eta > 0\) splits at \(x_0 = \sqrt{\eta/\mu}\), where the positive
part switches on, and gives \(J = \delta^2/2 - F(\delta) + F(x_0)\).
Since \(\sqrt{\mu x_0^2 - \eta} = 0\) and
\(\sqrt\mu\, x_0 = \sqrt{\eta}\), \[
F(x_0) = -\frac{\eta}{2\sqrt\mu}\log\sqrt{\eta}
       = \frac{\eta}{4\sqrt\mu}\log\frac{1}{\eta} ,
\] which is the displayed singular term; the case \(\eta < 0\) has no
switch and the same computation applies at \(x = 0\). What remains,
\(A = \delta^2/2 - F(\delta)\), involves \(\sqrt{\mu\delta^2 - \eta}\)
and \(\log(\sqrt\mu\,\delta + \sqrt{\mu\delta^2 - \eta})\), both real
analytic in \((\mu, \eta)\) wherever \(\mu\delta^2 > \lvert\eta\rvert\)
and \(\mu > 0\). \(\square\)

Equation~\ref{eq-endpoint} is what those two steps needed. Because \(A\)
is analytic, the endpoint contributions are differentiable in \(\alpha\)
once the explicit \(\eta\log(1/\lvert\eta\rvert)\) terms are removed,
which is exactly the differentiability of \(\Lambda\); and because
\(A\)'s derivatives are bounded on a fixed neighborhood, the resulting
\(O(\lvert\alpha\rvert^2)\) bound holds with a constant independent of
the direction of \(\alpha\). Two approximations were used to reach
\(J\), namely \(\arccos t - \arccos t_0 = t_0 - t + O(t^3)\) and
\(\sin x = x + O(x^3)\); both are Taylor expansions with remainders
uniform on \([0, \delta]\). Since the \(\alpha_k\) can be centered,
\(\max_{k<l}\lvert e_{kl}\rvert\) is comparable to
\(\lvert\alpha\rvert\), so the displayed sum in Equation~\ref{eq-local}
is of exact order \(\lvert\alpha\rvert^2\log(1/\lvert\alpha\rvert)\).

The cubic Taylor corrections need one further step, because at first
sight they are not small enough to ignore. They lead to integrals of
\(\left[\mu x^2 - \eta\right]_+^{3/2}\), whose singular part is
\(\frac{3\eta^2}{16\sqrt\mu}\log\frac{1}{\lvert\eta\rvert}\) by the same
antiderivative used in Lemma~\ref{lem-endpoint}, contributing
\(-\frac{1}{32\sqrt\mu}\eta^2\log\frac{1}{\lvert\eta\rvert}\) to the
endpoint. That is the order of the terms Equation~\ref{eq-local}
retains.

A second correction of the same order cancels it, and the two must be
taken together. Reaching \(J\) also replaced \(\sin x\) by \(x\), and
\(\sin^2 x = x^2 - x^4/3 + O(x^6)\), so the square root carries an extra
\(-\mu x^4 / \left(6\sqrt{\mu x^2 - \eta}\right)\). Now
\(\int x^4 (x^2 - \eta)^{-1/2} dx\) has log coefficient \(3\eta^2/8\),
giving \(+\frac{1}{32\sqrt\mu}\eta^2\log\frac{1}{\lvert\eta\rvert}\),
which is the negative of the first. The two same-order corrections
annihilate each other at each endpoint separately, so no
\(\eta^2\log(1/\lvert\eta\rvert)\) term ever reaches the sum and the
coefficient in Equation~\ref{eq-local} is the one
Lemma~\ref{lem-endpoint} already gives. The higher terms of both series
enter at order \(\lvert\alpha\rvert^3\log(1/\lvert\alpha\rvert)\).

Two checks. Numerically, the even-in-\(\eta\) part of a single endpoint
is \(-\eta^2/2\) with no logarithm: the ratio of that part to \(\eta^2\)
holds at \(-0.500\) across \(\eta = 10^{-3}, 10^{-4}, 10^{-5}\), whereas
an uncancelled \(\eta^2\log\) term would make it grow like
\(\log(1/\eta)\). Structurally, at \(\mu = 1\) the reflection
\(x \mapsto \pi/2 - x\) turns \(\sin^2 x\) into \(1 - \sin^2 x\), whence
\(\arccos\sqrt{\cos^2 x - \eta} = \arcsin\sqrt{\sin^2 x + \eta}\), and
since \(\arcsin u + \arccos u = \pi/2\) pointwise, including where the
arguments are clipped, \[
I(1, \eta) + I(1, -\eta) = \frac{\pi^2}{4} ,
\] so \(I(1, \cdot) - \pi^2/8\) is odd and every even-order term in
\(\eta\), the \(\eta^2\log\) term included, vanishes identically there.

\section*{References}\label{references}
\addcontentsline{toc}{section}{References}

\phantomsection\label{refs}
\begin{CSLReferences}{1}{0}
\bibitem[\citeproctext]{ref-carroll1893}
Carroll, Lewis. 1893. \emph{Curiosa Mathematica, Part {II}: Pillow
Problems Thought Out During Sleepless Nights}. London: Macmillan.

\bibitem[\citeproctext]{ref-edelman2015}
Edelman, Alan, and Gilbert Strang. 2015. {``Random Triangle Theory with
Geometry and Applications.''} \emph{Foundations of Computational
Mathematics} 15 (3): 681--713.
\url{https://doi.org/10.1007/s10208-015-9250-3}.

\bibitem[\citeproctext]{ref-eisenberg1996}
Eisenberg, Bennett, and Rosemary Sullivan. 1996. {``Random Triangles in
\(n\) Dimensions.''} \emph{The American Mathematical Monthly} 103 (4):
308--18. \url{https://doi.org/10.1080/00029890.1996.12004742}.

\bibitem[\citeproctext]{ref-guy1993}
Guy, Richard K. 1993. {``There Are Three Times as Many Obtuse-Angled
Triangles as There Are Acute-Angled Ones.''} \emph{Mathematics Magazine}
66 (3): 175--79.

\bibitem[\citeproctext]{ref-kendall1989}
Kendall, David G. 1989. {``A Survey of the Statistical Theory of
Shape.''} \emph{Statistical Science} 4 (2): 87--99.
\url{https://doi.org/10.1214/ss/1177012582}.

\bibitem[\citeproctext]{ref-kendall1963}
Kendall, M. G., and P. A. P. Moran. 1963. \emph{Geometrical
Probability}. London: Charles Griffin.

\bibitem[\citeproctext]{ref-portnoy1994}
Portnoy, Stephen. 1994. {``A {Lewis Carroll} Pillow Problem: Probability
of an Obtuse Triangle.''} \emph{Statistical Science} 9 (2): 279--84.
\url{https://doi.org/10.1214/ss/1177010497}.

\bibitem[\citeproctext]{ref-wendel1962}
Wendel, J. G. 1962. {``A Problem in Geometric Probability.''}
\emph{Mathematica Scandinavica} 11: 109--12.
\url{https://doi.org/10.7146/math.scand.a-10655}.

\end{CSLReferences}

\end{document}